\documentclass[12pt,a4paper,reqno]{amsart}
\usepackage{amsmath}
\usepackage{amsfonts}
\usepackage{amssymb}
\usepackage{amsthm}
\usepackage{enumerate}
\usepackage{centernot}
\usepackage{fullpage}
\usepackage{mathrsfs}
\usepackage{graphicx}
\usepackage{tikz-cd}
\usepackage{color}

\newcommand{\upperRomannumeral}[1]{\uppercase\expandafter{\romannumeral#1}}

\theoremstyle{plain}

  \newtheorem{proposition}[]{Proposition}
  \newtheorem{lemma}[]{Lemma}
  \newtheorem{theorem}[]{Theorem}
  \newtheorem{corollary}[]{Corollary}
  
  \newtheorem{remark}[]{Remark}

\title[Mass spectrum in the Ising model]{The Gaussian process for particle masses in the near-critical Ising model}
\author{Federico Camia}
\address{Division of Science, NYU Abu Dhabi, Saadiyat Island, Abu Dhabi, UAE \& Courant Institute of Mathematical Sciences, New York University, 251 Mercer st, New York, NY 10012, USA \& Department of Mathematics, Faculty of Science, Vrije Universiteit Amsterdam, De Boelelaan 1111, 1081 HV Amsterdam, The Netherlands.}
\email{federico.camia@nyu.edu}
\author{Jianping Jiang}
\address{Yanqi Lake Beijing Institute of Mathematical Sciences and Applications, Building 11, Yanqi Island, Yanqi Lake West Road, Beijing 101408, China.}
\email{jianpingjiang11@gmail.com}
\author{Charles M. Newman}
\address{Courant Institute of Mathematical Sciences, New York University, 251 Mercer st, New York, NY 10012, USA \& NYU-ECNU Institute of Mathematical ciences at NYU Shanghai, 3663 Zhongshan Road North, Shanghai 200062, China.}
\email{newman@cims.nyu.edu}


\begin{document}
\begin{abstract}
We review the construction of a stationary Gaussian process $X(t)$ starting from the near-critical
continuum scaling limit $\Phi^h$ of the Ising magnetization and its relation to the mass spectrum of the relativistic quantum field theory
associated to $\Phi^h$. Then for 
the near-critical Ising model on $a \mathbb{Z}^2$ with external field $a^{15/8} h$,
we study the renormalized magnetization along a vertical line (with horizontal coordinate 
approximately~$t$) and prove that the limit as $a\downarrow 0$ is the 
same Gaussian process $X(t)$.  We also explore the possible extension 
of this approach to dimensions $d > 2$.
\end{abstract}
\maketitle

\section{Introduction}
\subsection{Overview}
Consider the Ising model at inverse critical temperature $\beta_c$ on $a\mathbb{Z}^2$ with $a>0$ and external field $a^{15/8}h\geq0$. Let $\{\sigma_x:x\in a\mathbb{Z}^2\}$ denote the basic spin random 
variables of the Ising model, and $\Phi^{a,h}$ denote the magnetization field
\begin{equation}
	\Phi^{a,h}:=a^{15/8}\sum_{x\in a\mathbb{Z}^2}\sigma_x\delta_x \, , 
\end{equation}
where $\delta_x$ is a unit Dirac point measure at $x$. In Theorem 1.2 of \cite{CGN15} and Theorem 1.4 of~\cite{CGN16}, it was proved that
\begin{equation}\label{eq:Phidef}
	\Phi^{a,h}\Rightarrow \Phi^h
\end{equation}
where $\Rightarrow$ denotes convergence in distribution, and $\Phi^h$ with 
$h\geq 0$ is a generalized random field on $\mathbb{R}^2$; see 
\cite{CJN20d} for a review. Euclidean random fields such as $\Phi^h$ on Euclidean 
``space-time" $\mathbb{R}^d:=\{x=(x_0,y_1,\dots,y_{d-1})\}$ are related to quantum fields on relativistic space-time, $\{(t,y_1,\dots,y_{d-1})\}$, essentially by replacing $x_0$ with a complex variable and analytically continuing from the  purely real $x_0$ to pure imaginary $(-it)$---see \cite{OS73, OS75}, Chapter 3 of \cite{GJ87} and \cite{MM97} for background. It was predicted in \cite{Zam89a, Zam89b} that the relativistic quantum field theory associated to the Euclidean field $\Phi^h$ with $h>0$ should have remarkable properties including the existence of eight distinct types of particles, with relations between the masses of those particles and 
the Lie algebra $E_8$ \cite{Del04, BG11, MM12}---see also 
\cite{CTW+10} (respectively, \cite{CH00}) for experimental (respectively, numerical) studies.

In \cite{CJN20a, CJN20b}, exponential decay of truncated correlations in $\Phi^h$ with $h>0$ was proved; this roughly says that in the relativistic quantum field theory associated to $\Phi^h$ with $h>0$, there is at least one particle with strictly positive mass and no smaller mass particles. In~\cite{CJN20c}, the authors took the limit of $\Phi^h(t,y)$ as the spatial coordinate $y$ scales to infinity with $t$ fixed and proved that it is a stationary Gaussian process $X(t)$ whose covariance function $K(t)$ should provide a useful tool for analyzing particle masses in the associated quantum field theory.

\subsection{Why are $X$ and $K$ of interest?} 
$K$ should capture some important information about particle masses of the quantum field 
theory associated with $\Phi^h$. In \cite{CJN20c}, based on \cite{Del04, Zam89a, Zam89b} 
and \eqref{eq:Kint} below, it was conjectured that there exist $m_1,m_2,m_3\in(0,\infty)$ 
and $B_1,B_2,B_3\in(0,\infty)$ such that, for large $t$,
\begin{equation}
	K(t)=B_1\exp(-m_1|t|)+B_2\exp(-m_2|t|)+B_3\exp(-m_3|t|)+O(\exp(-2m_1|t|)),
\end{equation}
with $m_1<m_2<m_3<2m_1$; the mass $m_1$ should be the same as in \eqref{eq:Kint}, 
and $m_2/m_1$, $m_3/m_1$ should take the predicted values, as in (1.8) of \cite{Zam89a}.

The exponential decay result in \cite{CJN20a, CJN20b} only shows that in the relativistic 
quantum field theory associated to $\Phi^h$, there is a mass gap---i.e., no particles with 
masses in $[0,m_1)$. A natural question is then whether the mass spectral measure 
$\rho$ (defined in \eqref{eq:tilderhodef} below) has an atom with strictly positive weight at 
$m_1$. As proved in Appendix B of \cite{CJN20c}, this would follow from Ornstein-Zernike 
behavior of the covariance function $H(t,y)$ of $\Phi^h$. We state that result as 
a proposition here.
\begin{proposition}[Propositions A and B of \cite{CJN20c}]
	Suppose that there exist a constant $C_1\in(0,\infty))$ such that
	\begin{equation}\label{eq:HOZ}
		\lim_{t\rightarrow\infty}\frac{H(t,0)}{t^{-1/2}\exp(-m_1t)}=C_1=\rho\left(\{m_1\}\right)\sqrt{m_1/(2\pi)}.
	\end{equation}
Then we have 
\begin{equation}
	\lim_{t\rightarrow\infty}\frac{K(t)}{\exp(-m_1t)}=C_1\sqrt{2\pi/m_1}=\rho\left(\{m_1\}\right).
\end{equation}
\end{proposition}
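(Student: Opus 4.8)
The plan is to exploit the relation between the two covariance functions coming from the construction of $X$ in \cite{CJN20c}: since $X(t)$ arises as the Gaussian limit of $\Phi^h$ smeared over a spatial line with horizontal coordinate $t$, the central-limit structure forces its covariance to be the spatial integral of the field covariance,
\begin{equation}\label{eq:KHrel}
  K(t)=\int_{\mathbb{R}}H(t,y)\,dy=2\int_0^\infty H(t,y)\,dy .
\end{equation}
Because the continuum magnetization field $\Phi^h$ is rotationally invariant, $H(t,y)$ depends only on $r=\sqrt{t^2+y^2}$; writing $\mathcal{H}(r):=H(r,0)$, the hypothesis \eqref{eq:HOZ} becomes the one-variable statement $\mathcal{H}(r)=C_1\,r^{-1/2}e^{-m_1 r}\,(1+o(1))$ as $r\to\infty$. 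Substituting into \eqref{eq:KHrel} gives
\begin{equation}\label{eq:Kradial}
  K(t)=\int_{\mathbb{R}}\mathcal{H}\!\left(\sqrt{t^2+y^2}\right)dy .
\end{equation}

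First I would extract the leading asymptotics of \eqref{eq:Kradial} by Laplace's method. For large $t$ the integrand is sharply peaked at $y=0$: expanding $r=\sqrt{t^2+y^2}=t+\tfrac{y^2}{2t}+O(y^4/t^3)$ and $r^{-1/2}=t^{-1/2}(1+o(1))$ on the scale $|y|\lesssim\sqrt{t}$, the substitution $y=\sqrt{t}\,u$ turns the central part of the integral into a Gaussian,
\begin{equation}\label{eq:gaussint}
  \int_{\mathbb{R}}t^{-1/2}e^{-m_1 t}e^{-m_1 y^2/(2t)}\,dy
  =t^{-1/2}e^{-m_1 t}\sqrt{\frac{2\pi t}{m_1}}
  =\sqrt{\frac{2\pi}{m_1}}\,e^{-m_1 t}.
\end{equation}
Thus $K(t)\sim C_1\sqrt{2\pi/m_1}\,e^{-m_1 t}$, and the arithmetic identity $C_1\sqrt{2\pi/m_1}=\rho(\{m_1\})$, which is the immediate consequence of $C_1=\rho(\{m_1\})\sqrt{m_1/(2\pi)}$ in \eqref{eq:HOZ}, identifies the limit with the atom of the spectral measure $\rho$ and yields both displayed equalities.

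The main obstacle is making the Laplace step rigorous, since \eqref{eq:HOZ} supplies only a limit along the $t$-axis rather than a uniform bound. I would first upgrade it to a one-sided control $\mathcal{H}(r)\le C\,r^{-1/2}e^{-m_1 r}$ valid for all $r\ge R_0$ (available from the limit together with continuity of $\mathcal{H}$), and then split $\int_{\mathbb{R}}=\int_{|y|\le t^{3/4}}+\int_{|y|>t^{3/4}}$. On the tail $|y|>t^{3/4}$ one has $r\ge\sqrt{t^2+t^{3/2}}=t+\tfrac12 t^{1/2}(1+o(1))$, so the uniform upper bound bounds this contribution by a polynomial factor times $e^{-m_1\sqrt{t^2+t^{3/2}}}=o(e^{-m_1 t})$, which is negligible against the main term. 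On the bulk $|y|\le t^{3/4}$ the rescaling $y=\sqrt{t}\,u$ confines $u$ to $|u|\le t^{1/4}$, where the ratio $\mathcal{H}(\sqrt{t^2+y^2})/\big(C_1 t^{-1/2}e^{-m_1 t}e^{-m_1 u^2/2}\big)\to 1$ uniformly by \eqref{eq:HOZ}, so dominated convergence produces exactly the Gaussian integral \eqref{eq:gaussint}. Collecting the two pieces gives the stated asymptotics. The genuinely delicate point is the uniformity of the Ornstein--Zernike asymptotic \emph{off} the axis needed to integrate over $y$; here it is handed to us for free by the rotational invariance of $\Phi^h$, but in its absence establishing this uniform radial profile would be the crux of the argument.
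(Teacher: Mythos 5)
The paper itself does not prove this proposition: it is imported verbatim from \cite{CJN20c} (Propositions A and B, proved in Appendix B there), so your argument can only be compared against what that citation must contain. Your route is the natural one and is essentially complete. The two inputs you need are both explicitly available in the paper: $K(t)=\int_{\mathbb{R}}H(t,y)\,dy$ is the definition \eqref{eq:Kdef}, and the paper states that $H$ is a function of the radial variable $\sqrt{t^2+y^2}$ (your $\mathcal{H}$ is the paper's $\hat H$). Given these, the Laplace-method computation correctly yields $K(t)\sim C_1\sqrt{2\pi/m_1}\,e^{-m_1 t}$, which is the substantive half of the conclusion.

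Two details need tightening, both routine. First, the convergence of the rescaled integrand to $C_1e^{-m_1u^2/2}$ is \emph{not} uniform over $|u|\le t^{1/4}$: with $y=\sqrt{t}\,u$ one has $\sqrt{t^2+y^2}-t-u^2/2=-u^4/(8t)+O(u^6/t^2)$, which is of order one when $|u|\asymp t^{1/4}$. This costs nothing, because dominated convergence needs only pointwise convergence in $u$ together with the domination $Ce^{-m_1u^2/3}$, which follows from $r\ge t$ and $r-t=y^2/(r+t)\ge u^2/3$ on the bulk; alternatively, shrink the bulk window to $|y|\le t^{5/8}$, where uniformity does hold. Second, the tail estimate as written is not a proof: you cannot bound $\int_{|y|>t^{3/4}}\mathcal{H}(r)\,dy$ by a polynomial factor times $\sup_{|y|>t^{3/4}}e^{-m_1r}$, since the domain is unbounded; you need decay in $y$, e.g.\ $r-t\ge y^2/(3t)$ for $t^{3/4}<|y|\le t$ and $r-t\ge|y|/3$ for $|y|>t$, which does make the tail $o(e^{-m_1t})$. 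Finally, you identify the limit with $\rho(\{m_1\})$ purely by invoking the assumed identity $C_1=\rho(\{m_1\})\sqrt{m_1/(2\pi)}$; the statement permits this, but it is worth knowing that the identification is provable unconditionally (and this is presumably the content of Proposition B of \cite{CJN20c}): by \eqref{eq:Kint} and the finiteness of $\rho$, one has $e^{m_1t}K(t)=\int_{m_1}^{\infty}e^{-(m-m_1)t}\,d\rho(m)\rightarrow\rho(\{m_1\})$ by dominated convergence, since the integrand is bounded by $1$ and tends pointwise to the indicator of $\{m=m_1\}$. Combined with your Laplace asymptotics, this \emph{derives} the equality $C_1\sqrt{2\pi/m_1}=\rho(\{m_1\})$ rather than assuming it---i.e., Ornstein--Zernike behavior of $H$ forces a strictly positive atom of $\rho$ at $m_1$, which is exactly the point emphasized in the paper's surrounding discussion.
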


Recently, a new proof of exponential decay (for truncated correlations in $\Phi^h$) based on 
the random current representation of the Ising model was given in \cite{KR21}. We believe that it is possible to combine the methods in \cite{Ott20b} and \cite{KR21} to give a rigorous proof of \eqref{eq:HOZ}. Then the next step would be to show that the mass spectrum has an upper gap $(m_1,m_1+\epsilon)$ for some $\epsilon>0$.

In this paper, we first review in Section \ref{sec:rev} the main results of \cite{CJN20c}. Then in 
Section \ref{sec:new}, we state some new results which are closely related to those of 
Section \ref{sec:rev}. 
In Section~\ref{subsec:2dim}, the main result is that the same Gaussian process
$X(t)$ can be obtained directly from the near-critical lattice Ising model on $a\mathbb{Z}^2$ by
appropriate scalings of the vertical and horizontal coordinates without use of the continuum field 
$\Phi^h$. Then in Section~\ref{subsec:alldim}, we
explore the possible extension of 
this approach to dimensions $d>2$. In Section \ref{sec:proofs}, we prove the results 
stated in Section \ref{sec:new}.

\section{The Gaussian process  from $\Phi^h$: a review}\label{sec:rev}
\subsection{Construction of the Gaussian process}
Let $H(t,y)$ be the covariance function of $\Phi^h$. Roughly speaking,
\begin{equation}
	H(t,y):=\text{Cov}\left(\Phi^h(t_0,y_0),\Phi^h(t_0+t,y_0+y)\right) \text{ for any }(t_0,y_0)\in\mathbb{R}^2.
\end{equation}
The existence of $H$ follows from Proposition 6.1.4 of \cite{GJ87}; this is because 
$\mathbb{E}\left(\exp(iz\Phi^h(f))\right)$ is analytic in $z$, which can be proved,
for example, by arguments based on the Lee-Yang theorem \cite{LY52} and the GHS 
inequality~\cite{GHS70}. For each fixed $L>0$, we define a collection of random variables 
$\{X_L(s):s\in\mathbb{R}\}$ by
\begin{equation}
	X_L(s):=\frac{\Phi^h\left(1_{[-L,L]}(y)\delta_s(t)\right)-\mathbb{E}\Phi^h\left(1_{[-L,L]}(y)\delta_s(t)\right)}{\sqrt{2L}},
\end{equation}
where $1_{[-L,L]}(y)\delta_s(t)$ is the product of an interval indicator function in $y$ and a delta function in $t$, and $\mathbb{E}$ is the expectation with respect to the random field $\Phi^h$. Formally,
\begin{equation}\label{eq:Phioff}
	\Phi^h(f):=\int_{\mathbb{R}^2}\Phi^h(x)f(x)dx.
	\end{equation}
In \cite{CGN16}, it was shown that \eqref{eq:Phioff} is well-defined for any $f$ in the dual of the Sobolev 
space $\mathcal{H}^{-3}(\mathbb{R}^2)$. This was later generalized in \cite{FM17} to any $f$ 
in the dual of the Besov space $\mathcal{B}_{p,q}^{-1/8-\epsilon,\text{loc}}(\mathbb{R}^2)$ where 
$\epsilon>0$ and $p,q\in[1,\infty]$. Since the test function $1_{[-L,L]}(y)\delta_s(t)$ is in 
neither of those two spaces, we instead refer to Lemma A in Appdendix A of \cite{CJN20c} for a 
justification of such a paring; the idea is to approximate the delta fnction with smooth 
functions. Let $\{X(s):s\in\mathbb{R}\}$ be a mean zero stationary Gaussian process with 
covariance function
\begin{equation}\label{eq:Kdef}
	\text{Cov}(X(s),X(t))=K(t-s):=\int_{-\infty}^{\infty}H(t-s,y)dy \text{ for any } s, t\in\mathbb{R}.
\end{equation}

The following was proved in \cite{CJN20c}. 
\begin{theorem}[Theorem 1 of \cite{CJN20c}]\label{thm:contGaus}
	Fix $h>0$. For any $n\in\mathbb{N}$ and distinct $s_1,\dots,s_n\in\mathbb{R}$, 
	\begin{equation}
		\left(X_L(s_1),\dots,X_L(s_n)\right)\Rightarrow \left(X(s_1),\dots,X(s_n)\right) \text{ as }L\rightarrow\infty,
	\end{equation}
where $\Rightarrow$ denotes convergence in distribution.
\end{theorem}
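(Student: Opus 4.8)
The plan is to prove convergence of all finite-dimensional distributions by the method of cumulants (equivalently, moments), exploiting the exponential decay of the truncated correlation functions of $\Phi^h$ established in \cite{CJN20a, CJN20b}. The heuristic is that $X_L(s)$ is a $(2L)^{-1/2}$-normalized average of the centered field along a vertical segment of length $2L$, and exponential decay makes values at well-separated vertical heights nearly independent, so a central-limit effect should force Gaussianity in the limit. First I would reduce the multivariate statement to a univariate one via the Cramér--Wold device: it suffices to show that for every choice of real coefficients $c_1,\dots,c_n$ the linear combination
\[
	Y_L := \sum_{i=1}^n c_i\, X_L(s_i)
\]
converges in distribution to the centered Gaussian $\sum_{i=1}^n c_i X(s_i)$, whose variance is $\sigma^2 := \sum_{i,j} c_i c_j\, K(s_i-s_j)$ by \eqref{eq:Kdef}. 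Writing $g_L(t,y):=\sum_i c_i\, 1_{[-L,L]}(y)\delta_{s_i}(t)$, we have $Y_L=(2L)^{-1/2}\bigl[\Phi^h(g_L)-\mathbb{E}\Phi^h(g_L)\bigr]$, a normalized average of the centered field over finitely many vertical segments.

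Next I would pass to cumulants. The existence of all moments of $\Phi^h(f)$ follows from the analyticity of $z\mapsto\mathbb{E}[\exp(iz\Phi^h(f))]$ (Lee--Yang and GHS), as already noted in the excerpt. Let $u_k^c$ denote the truncated $k$-point correlation function of $\Phi^h$, which is translation invariant. Then
\[
	\kappa_k(Y_L)=(2L)^{-k/2}\sum_{i_1,\dots,i_k} c_{i_1}\cdots c_{i_k}\int_{[-L,L]^k} u_k^c\bigl(s_{i_1},y_1;\dots;s_{i_k},y_k\bigr)\,dy_1\cdots dy_k .
\]
For $k=2$, stationarity gives $u_2^c(s_i,y_1;s_j,y_2)=H(s_i-s_j,y_1-y_2)$, and a change to relative and center-of-mass vertical coordinates, together with the $L^1$-integrability of $H(s_i-s_j,\cdot)$ coming from exponential decay, yields $(2L)^{-1}\int_{[-L,L]^2}H(s_i-s_j,y_1-y_2)\,dy_1\,dy_2\to\int_{\mathbb{R}}H(s_i-s_j,y)\,dy=K(s_i-s_j)$, so $\kappa_2(Y_L)\to\sigma^2$. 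For $k\ge 3$, translation invariance in $y$ lets me integrate out one overall vertical shift, contributing a factor $\asymp 2L$, while the integral over the $k-1$ relative vertical coordinates converges absolutely by exponential decay and is $O(1)$; hence $|\kappa_k(Y_L)|=O\bigl((2L)^{1-k/2}\bigr)\to 0$. Since the second cumulant converges to $\sigma^2$ and all higher cumulants vanish, every moment of $Y_L$ converges to the corresponding Gaussian moment, and because the Gaussian law is determined by its moments (Carleman) this gives $Y_L\Rightarrow N(0,\sigma^2)$, closing the Cramér--Wold argument.

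The hard part will be making the $k\ge 3$ estimate rigorous, which splits into two issues. First, the test functions carry a delta in the $t$-variable, so each ``point'' is confined to a vertical line; the pairing $\Phi^h(1_{[-L,L]}(y)\delta_s(t))$ must be justified through the smoothing approximation of Lemma~A in \cite{CJN20c}, and the truncated correlations interpreted as the corresponding limits of smeared correlations. Second, and most importantly, I would need the exponential decay of \cite{CJN20a, CJN20b} to supply a \emph{uniform} $L^1$-bound on $u_k^c$ in the relative vertical coordinates, strong enough both to extract the leading $2L$ contribution and to control the boundary corrections to the integral over $[-L,L]^k$. Establishing that all truncated correlations (not merely the two-point function) decay fast enough in the vertical direction, uniformly over the finitely many horizontal positions $s_1,\dots,s_n$, is the technical heart of the argument; once that integrability is secured, the power counting in $L$ finishes the proof.
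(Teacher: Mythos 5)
Your overall strategy (Cram\'er--Wold plus cumulants) is not the paper's route, and as written it has a genuine gap that you yourself flag but do not close: the vanishing of the cumulants $\kappa_k(Y_L)$ for $k\geq 3$ requires exponential decay (indeed, even a pointwise meaning and local integrability along vertical lines) of \emph{all} truncated $k$-point correlation functions of $\Phi^h$. The exponential decay established in \cite{CJN20a, CJN20b} concerns the truncated \emph{two}-point function only (this is what yields the mass gap and the integrability of $H(t,\cdot)$); nothing in the cited literature supplies the uniform $L^1$-bounds on $u_k^c$, $k\geq 3$, that your power counting needs, and obtaining them for the continuum field $\Phi^h$ (via lattice estimates uniform in the cutoff, with delta functions in the $t$-variable) would be a substantial new result, not a technical verification. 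So the ``technical heart'' you defer is precisely the missing ingredient, and the proof does not go through with the available inputs.

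The paper's proof of the corresponding statement (reproduced in the proof of Theorem \ref{thm:main}, which follows the argument of Theorem 1 of \cite{CJN20c}) is designed exactly to avoid this issue. It uses the FKG structure of the Ising field together with Theorem \ref{thm:New} (Theorem 1 of \cite{New80}): the vertical line is cut into $O(L^{1/2})$ blocks of height $O(L^{1/2})$, and Newman's inequality bounds the difference between the characteristic function of $Y_L$ and the product of the block characteristic functions by the \emph{sum of pairwise covariances} between distinct blocks --- no higher-order correlation functions appear at all. Convergence of covariances (the analogue of Proposition \ref{prop:Cov1}) then makes this error vanish, and a Taylor expansion of the block characteristic functions gives the Gaussian limit. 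The price of this method is that Newman's inequality requires the summands to be increasing functions of the FKG system, which forces the restriction to nonnegative coefficients $(z_1,\dots,z_n)\in[0,\infty)^n$; that restriction is then removed by the $Y^+_L, Y^-_L$ decomposition and the bivariate-normal characterization of \cite{HT75}, rather than by Cram\'er--Wold with arbitrary signs. If you want to salvage your approach, you would either have to prove the all-order decay bounds (hard, and not in the literature), or replace the cumulant step by an argument, like Newman's, that needs only second-order information.
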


$H$ is really a function of the radial variable $\sqrt{t^2+y^2}$. Note that $H$ actually 
depends on $h$; we only distinguish whether $h=0$ or not since all results in this paper are 
insensitive to the exact value $h>0$. When $h=0$, we always write $H^0$ for the 
covariance function of $\Phi^0$. By Proposition \ref{prop:Cov} below and Wu's 
result~\cite{Wu66, MW73} (see also Remark 1.4 of \cite{CHI15} and Theorem 3.1 of \cite{Che18}), 
we have
\begin{equation}
	H^0(t,y)=C_2(t^2+y^2)^{-1/8}, \text{ for any } (t,y)\in\mathbb{R}^2 \text{ with } (t,y) \neq (0,0),
\end{equation}
where $C_2\in(0,\infty)$ is a universal constant. The following small distance/time behavior of $H$ and $K$ was also proved  in \cite{CJN20c}.
\begin{theorem} [Theorem 2 of \cite{CJN20c}]\label{thm:HKsmalldist}
	Fix $h>0$.
	\begin{align}
		&\lim_{\lambda\downarrow 0} \lambda^{1/4} H(0,\lambda)=H^0(0,1)=C_2.\\
		&\lim_{\epsilon\downarrow 0}\frac{K(0)-K(\epsilon)}{\epsilon^{3/4}}=2\int_0^{\infty}\left[H^0(0,y)-H^0(1,y)\right]dy\in(0,\infty).
	\end{align}
\end{theorem}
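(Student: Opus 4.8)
The plan is to reduce everything to a single exact scaling identity for the covariance together with one continuity statement in the external field. Writing $H^{h}$ for the covariance of $\Phi^{h}$ (so that $H=H^{h}$ in the statement), convergence \eqref{eq:Phidef} gives $H^{h}(x)=\lim_{a\downarrow0}a^{-1/4}G_{a}(0,x;h)$, where $G_{a}(0,x;h)$ denotes the truncated two-point function of the spins on $a\mathbb{Z}^{2}$ at $\beta_{c}$ with external field $a^{15/8}h$. Because rescaling the lattice is only a change of units, $G_{a}(0,x;h)=G_{1}(0,x/a;a^{15/8}h)$; inserting this, substituting $a=\lambda b$, and letting $b\downarrow0$ yields
\begin{equation}\label{eq:Hscal}
 H^{h}(\lambda t,\lambda y)=\lambda^{-1/4}H^{\lambda^{15/8}h}(t,y),\qquad\lambda>0 ,
\end{equation}
which for $h=0$ recovers the homogeneity $H^{0}(\lambda t,\lambda y)=\lambda^{-1/4}H^{0}(t,y)$ of $C_{2}(t^{2}+y^{2})^{-1/8}$. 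Since $\lambda^{15/8}h\downarrow0$ as $\lambda\downarrow0$, the whole theorem will follow from \eqref{eq:Hscal} together with the right-continuity input
\begin{equation}\label{eq:Hcont}
 \lim_{h'\downarrow0}H^{h'}(t,y)=H^{0}(t,y)\qquad\text{for each fixed }(t,y)\neq(0,0) .
\end{equation}

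For the first limit I would set $(t,y)=(0,1)$ in \eqref{eq:Hscal}, obtaining the exact identity $\lambda^{1/4}H^{h}(0,\lambda)=H^{\lambda^{15/8}h}(0,1)$ for every $\lambda>0$, and then let $\lambda\downarrow0$ and apply \eqref{eq:Hcont} with $h'=\lambda^{15/8}h$ to get $\lim_{\lambda\downarrow0}\lambda^{1/4}H^{h}(0,\lambda)=H^{0}(0,1)=C_{2}$. The Griffiths and GHS inequalities make $h'\mapsto H^{h'}(0,1)$ nonnegative and nonincreasing, so the limit in \eqref{eq:Hcont} automatically exists and is $\le H^{0}(0,1)$; the real content of \eqref{eq:Hcont} is the matching lower bound.

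For the second limit I would first convert the left-hand side into one scaled integral. Using $K(t)=\int_{\mathbb{R}}H^{h}(t,y)\,dy$, the radial symmetry of $H$, the substitution $y=\epsilon u$ in $K(0)-K(\epsilon)=2\int_{0}^{\infty}[H^{h}(0,y)-H^{h}(\epsilon,y)]\,dy$, and then \eqref{eq:Hscal} with $\lambda=\epsilon$, I obtain
\begin{equation}\label{eq:KoverE}
 \frac{K(0)-K(\epsilon)}{\epsilon^{3/4}}=2\int_{0}^{\infty}\bigl[H^{\epsilon^{15/8}h}(0,u)-H^{\epsilon^{15/8}h}(1,u)\bigr]\,du .
\end{equation}
By \eqref{eq:Hcont} the integrand converges pointwise to $H^{0}(0,u)-H^{0}(1,u)$, so it only remains to pass to the limit under the integral. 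On a fixed interval $[0,R]$ this is routine dominated convergence, since Griffiths and GHS give $|H^{h'}(0,u)-H^{h'}(1,u)|\le 2H^{0}(0,u)=2C_{2}u^{-1/4}\in L^{1}(0,R)$. The one point requiring care is a tail estimate uniform in the field strength. Treating $H^{h'}$ as a radial function and substituting $v=\sqrt{1+u^{2}}$ in the term $\int_{R}^{\infty}H^{h'}(\sqrt{1+u^{2}})\,du$ decouples the two contributions and gives, for every $h'>0$,
\begin{equation}\label{eq:tailbd}
 \Bigl|\int_{R}^{\infty}\bigl[H^{h'}(0,u)-H^{h'}(1,u)\bigr]\,du\Bigr|\le\int_{R}^{\sqrt{1+R^{2}}}H^{h'}(u)\,du+\int_{\sqrt{1+R^{2}}}^{\infty}H^{h'}(u)\,\frac{u-\sqrt{u^{2}-1}}{\sqrt{u^{2}-1}}\,du ,
\end{equation}
and the bounds $H^{h'}(u)\le C_{2}u^{-1/4}$, $\sqrt{1+R^{2}}-R\le\tfrac{1}{2R}$, and $\frac{u-\sqrt{u^{2}-1}}{\sqrt{u^{2}-1}}=O(u^{-2})$ make the right-hand side $O(R^{-5/4})$ uniformly in $h'>0$. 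A three-$\epsilon$ argument then yields the limit of \eqref{eq:KoverE}, and positivity and finiteness of $2\int_{0}^{\infty}[H^{0}(0,y)-H^{0}(1,y)]\,dy$ follow from $y^{-1/4}>(1+y^{2})^{-1/8}$ for $y>0$ and the integrand behaving like $C_{2}y^{-1/4}$ at $0$ and $\tfrac{C_{2}}{8}y^{-9/4}$ at $\infty$.

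The one genuinely delicate ingredient, which I expect to be the main obstacle, is the right-continuity \eqref{eq:Hcont} at $h'=0$, i.e. that the near-critical two-point function at a fixed nonzero separation returns to its critical value as the field is switched off. Monotonicity (GHS) reduces this to the matching lower bound, which I would extract from the convergence of the near-critical fields $\Phi^{h'}$ to $\Phi^{0}$ as $h'\downarrow0$ together with convergence of their second moments; this last step is where the analytic control coming from the Lee--Yang theorem and the GHS inequality---uniform integrability of $\Phi^{h'}$ smeared against test functions approximating point masses---is essential, and also where the order of the limits $a\downarrow0$ and $h'\downarrow0$ must be interchanged with care. Everything else---the scaling identity \eqref{eq:Hscal}, the correlation-inequality bounds $0\le H^{h'}\le H^{0}$, and the uniform tail estimate \eqref{eq:tailbd}---is comparatively mechanical once \eqref{eq:Hcont} is available.
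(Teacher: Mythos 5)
First, a framing remark: the paper you were given does not prove this theorem---it is quoted verbatim as Theorem~2 of \cite{CJN20c}---so your attempt can only be measured against the tools the paper itself supplies, namely the pointwise two-point-function limit \eqref{eq:ttlimit} of Proposition~\ref{prop:Cov}, the moment convergence of Lemma~\ref{lem:momconv}, and the correlation inequalities \eqref{eq:Sch} and \eqref{eq:Gri}. Judged that way, your architecture is sound: the exact scaling identity $H^{h}(\lambda t,\lambda y)=\lambda^{-1/4}H^{\lambda^{15/8}h}(t,y)$, obtained by relabelling $a\mathbb{Z}^{2}$ as $\mathbb{Z}^{2}$ and regrouping the $a\downarrow0$ limit, is correct (one citation slip: the input $H^{h}(x)=\lim_{a\downarrow0}a^{-1/4}\langle\sigma_{0};\sigma_{x_{a}}\rangle_{a,h}$ is not an immediate consequence of the weak convergence \eqref{eq:Phidef}; it is precisely \eqref{eq:ttlimit}, which the paper proves via Lemma~\ref{lem:momconv} and the SMM and GKS inequalities). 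Your reduction of both limits to the single continuity statement $\lim_{h'\downarrow0}H^{h'}(t,y)=H^{0}(t,y)$, the dominated-convergence step with the bound $0\le H^{h'}\le H^{0}$, and the uniform-in-$h'$ tail estimate of order $R^{-5/4}$ are all correct.

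The genuine gap is the one you flag yourself: the continuity at $h'=0$ is never proved, and the route you sketch for it---weak convergence $\Phi^{h'}\Rightarrow\Phi^{0}$ as $h'\downarrow0$ plus uniform integrability, with an interchange of the limits $a\downarrow0$ and $h'\downarrow0$---is not backed by any result you can cite, and is exactly where the original difficulty would reappear. The gap can, however, be closed with tools already in your argument, with no interchange of limits. GHS gives the upper bound $H^{h'}\le H^{0}$, as you note. For the matching lower bound, use the other Griffiths inequality on the lattice: the untruncated two-point function is nondecreasing in the external field, and the one-point function is constant and vanishes at $h=0$, $\beta=\beta_{c}$, so
\begin{equation}
\langle\sigma_{0};\sigma_{x_{a}}\rangle_{a,h'}\;\ge\;\langle\sigma_{0};\sigma_{x_{a}}\rangle_{a,0}-\langle\sigma_{0}\rangle_{a,h'}^{2}.
\end{equation}
Multiply by $a^{-1/4}$ and let $a\downarrow0$: the left side tends to $H^{h'}(x)$ and the first term on the right to $H^{0}(x)$ by \eqref{eq:ttlimit}, while $a^{-1/4}\langle\sigma_{0}\rangle_{a,h'}^{2}=\bigl(a^{-1/8}\langle\sigma_{0}\rangle_{a,h'}\bigr)^{2}\to m_{h'}^{2}$, where $m_{h'}:=\mathbb{E}\Phi^{h'}(f)/\int f$ (for any $f\ge0$) is the constant continuum one-point function, whose existence follows from Lemma~\ref{lem:momconv} and translation invariance. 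Hence $H^{h'}(x)\ge H^{0}(x)-m_{h'}^{2}$. Finally, the very same relabelling-and-regrouping trick that produced your scaling identity yields $m_{\lambda^{15/8}h}=\lambda^{1/8}m_{h}$, i.e.\ $m_{h'}=(h'/h)^{1/15}m_{h}\to0$ as $h'\downarrow0$. With this, your continuity statement holds and both limits of the theorem follow exactly as you wrote them.
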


\subsection{Relation to quantum field theory}
Since $\Phi^h$ is a Euclidean field satisfying the Osterwalder-Schrader axioms, by the 
K\"all\'en-Lehmann spectral formula (see Theorem~6.2.4 of \cite{GJ87}), we have
\begin{equation}
	H(s,y)=\int_0^{\infty}\left(\int_{-\infty}^{\infty}\int_0^{\infty} \exp(ipy)\exp(-E|s|)\delta(m^2 +p^2-E^2)dE dp\right)d\tilde{\rho}(m),
\end{equation}
where $\tilde{\rho}$ is a mass spectral measure of the relativistic quantum field theory obtained from $\Phi^h$ via the Osterwalder-Schrader reconstruction theorem \cite{OS73, OS75}. Here for fixed $p$ and $m$,
\begin{equation}
	\delta(m^2+p^2-E^2)=\frac{\delta(\sqrt{m^2+p^2}+E)+\delta(\sqrt{m^2+p^2}-E)}{2\sqrt{m^2+p^2}}.
\end{equation}
Then an easy computation (see \cite{CJN20c} for the details) gives
\begin{equation}
	K(s)=\pi\int_0^{\infty}\frac{\exp(-|s|m)}{m}d\tilde{\rho}(m), \text{ for any }s\in\mathbb{R}.
\end{equation}
By Theorem 1.4 and Remark 1.7 of \cite{CJN20a}, the support of $\tilde{\rho}$ is in $[m_1,\infty)$ for some $m_1>0$. If we define a new measure $\rho$ by the Radon-Nikodym derivative
\begin{equation}\label{eq:tilderhodef}
	\frac{d\rho}{d\tilde{\rho}}(m)=\frac{\pi}{m} \,  ,
\end{equation}
then we have the following.
\begin{proposition}[Theorem 1 and Corollary 1 of \cite{CJN20c}]
	There exists $m_1\in(0,\infty)$ such that
	\begin{equation}\label{eq:Kint}
		K(s)=\int_{m_1}^{\infty}\exp(-m|s|)d\rho(m).
	\end{equation}
	Moreover, $\rho$ is a finite measure, but its first moment is infinite.
\end{proposition}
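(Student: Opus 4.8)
The plan is to take as given the K\"all\'en-Lehmann representation of $H$, the derived identity $K(s)=\pi\int_0^\infty m^{-1}\exp(-|s|m)\,d\tilde\rho(m)$, the mass gap $\operatorname{supp}\tilde\rho\subseteq[m_1,\infty)$ with $m_1>0$ from \cite{CJN20a}, and the short-distance asymptotics of Theorem \ref{thm:HKsmalldist}, and to prove the three assertions in turn. The representation \eqref{eq:Kint} itself is a direct substitution: because $\tilde\rho$ is supported in $[m_1,\infty)$ the factor $m^{-1}$ is bounded on the support, so inserting the Radon-Nikodym derivative \eqref{eq:tilderhodef} gives
\[
K(s)=\pi\int_{m_1}^\infty \frac{\exp(-|s|m)}{m}\,d\tilde\rho(m)=\int_{m_1}^\infty \exp(-m|s|)\,\frac{\pi}{m}\,d\tilde\rho(m)=\int_{m_1}^\infty \exp(-m|s|)\,d\rho(m),
\]
with $m_1>0$ supplied by the cited mass-gap result.

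Next, to show that $\rho$ is finite I would let $s\downarrow 0$ in \eqref{eq:Kint}: the integrands $\exp(-m|s|)$ increase to $1$, so monotone convergence yields $\lim_{s\downarrow 0}K(s)=\rho([m_1,\infty))$, a priori possibly infinite. It then suffices to show that $K(0)<\infty$ and that $K$ is continuous at the origin. For finiteness I would write $K(0)=\int_{-\infty}^\infty H(0,y)\,dy$ and split the integral: near $y=0$ Theorem \ref{thm:HKsmalldist} supplies the integrable singularity $H(0,y)\sim C_2|y|^{-1/4}$, while the exponential decay of correlations from \cite{CJN20a,CJN20b} controls the tail. Continuity at $0$ likewise follows from Theorem \ref{thm:HKsmalldist}, since $K(0)-K(\epsilon)=O(\epsilon^{3/4})\to 0$. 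Combining these, $\rho([m_1,\infty))=K(0)<\infty$.

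Finally, for the infinite first moment I would differentiate \eqref{eq:Kint} at the origin in a one-sided, monotone fashion. For $s>0$,
\[
\frac{K(0)-K(s)}{s}=\int_{m_1}^\infty \frac{1-\exp(-ms)}{s}\,d\rho(m),
\]
and since $(1-\exp(-ms))/s$ increases to $m$ as $s\downarrow 0$ (a short computation shows its $s$-derivative is negative), monotone convergence gives $\int_{m_1}^\infty m\,d\rho(m)=\lim_{s\downarrow 0}(K(0)-K(s))/s$. But Theorem \ref{thm:HKsmalldist} gives $K(0)-K(s)\sim C s^{3/4}$, so this quantity behaves like $C s^{-1/4}\to\infty$, forcing $\int_{m_1}^\infty m\,d\rho(m)=\infty$.

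The analytic content is thus concentrated entirely in Theorem \ref{thm:HKsmalldist}: the $|y|^{-1/4}$ singularity of $H$ simultaneously guarantees integrability of $H(0,\cdot)$, hence finiteness of $\rho$, and produces the $\epsilon^{3/4}$ (rather than linear) vanishing of $K(0)-K(\epsilon)$, hence the infinite first moment. Given that input, every remaining step is a routine application of monotone convergence to the spectral representation, the only delicate points being continuity of $K$ at the origin and the monotonicity of $(1-\exp(-ms))/s$, both indicated above. I expect that the genuine obstacle in a self-contained treatment---one not assuming Theorem \ref{thm:HKsmalldist}---would be establishing this sharp short-distance behavior of the near-critical covariance $H$, which rests on Wu's exact critical two-point function $H^0(t,y)=C_2(t^2+y^2)^{-1/8}$ together with a comparison between the critical and near-critical fields.
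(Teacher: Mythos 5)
Your proposal is correct. For the representation \eqref{eq:Kint} you do exactly what the paper does: insert the change of measure \eqref{eq:tilderhodef} into the K\"all\'en--Lehmann identity $K(s)=\pi\int_0^\infty m^{-1}\exp(-|s|m)\,d\tilde{\rho}(m)$ and use the mass gap from \cite{CJN20a} to restrict the integral to $[m_1,\infty)$. Bear in mind, though, that for the remaining two assertions this paper offers no proof at all---it simply cites Theorem 1 and Corollary 1 of \cite{CJN20c}---so there your argument stands on its own, and it is sound. Two small remarks. First, since the spectral identity is already valid at $s=0$, you get $\rho([m_1,\infty))=K(0)$ immediately, so your monotone-convergence-plus-continuity detour can be skipped: finiteness of $\rho$ reduces directly to $K(0)=\int_{-\infty}^{\infty}H(0,y)\,dy<\infty$, which holds for exactly the reasons you give (the integrable $|y|^{-1/4}$ singularity from Theorem \ref{thm:HKsmalldist}, and the exponential tail of $H$ from \cite{CJN20a,CJN20b}; cf.\ Remark \ref{rem:Cov}). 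Second, your monotone-convergence identity $\lim_{s\downarrow0}(K(0)-K(s))/s=\int_{m_1}^{\infty}m\,d\rho(m)$, combined with $K(0)-K(s)\sim Cs^{3/4}$, $C>0$, from Theorem \ref{thm:HKsmalldist}, does force the first moment to be infinite; this is the same mechanism used in the cited source, where the non-differentiability of $K$ at the origin is precisely what rules out a finite first moment. You are also right that all the analytic weight is carried by Theorem \ref{thm:HKsmalldist}, which both you and this paper take as an input; since that result (Theorem 2 of \cite{CJN20c}) is proved there independently of the present proposition, no circularity arises.
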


\section{A Gaussian process from the discrete Ising model}\label{sec:new}
\subsection{Two-dimensional results}\label{subsec:2dim}
Theorem \ref{thm:contGaus}  says that the limit of the centered $\Phi^h$ as the spatial 
coordinate $y$ scales to infinity with 
the Euclidean time coordinate $t$ fixed is a mean zero stationary Gaussian process 
$X(t)$ with covariance function $K(t)$. We prove in Section~\ref{sec:proofs} below
that this Gaussian process can also be 
obtained directly from the near-critical Ising model on $a\mathbb{Z}^2$.

Denote by $P_h^a$ the infinite volume Ising measure at the inverse critical temperature $\beta_c$ on 
$a\mathbb{Z}^2$ with external field $a^{15/8}h> 0$. Let $\langle\cdot\rangle_{a,h}$ denote 
expectation with respect to $P_h^a$. For $s\in \mathbb{R}$, let $s_a$ denote a point in 
$a\mathbb{Z}$ that is closest to $s$. For $L>0$ and $s\in\mathbb{R}$, we define
\begin{equation}\label{eq:DefX_L}
X_{L}(s):=\frac{a^{7/8}\sum_{k\in a\mathbb{Z}\cap [-L,L]}\left[\sigma_{(s_a,k)}-\langle\sigma_{(s_a,k)}\rangle_{a,h}\right]}{\sqrt{2L}}.
\end{equation}

Our main result for $d=2$ is the following.
\begin{theorem}\label{thm:main}
Suppose $L(a)>0$ is a function of $a$ satisfying $L(a)\rightarrow\infty$ as $a\downarrow 0$.
Then for any $n\in\mathbb{N}$ and distinct $s_1,\dots,s_n\in\mathbb{R}$, we have
\begin{equation}\label{eq:thmmain}
\left(X_{L(a)}(s_{1,a}),\dots,X_{L(a)}(s_{n,a})\right)\Rightarrow (X(s_1),\dots,X(s_n)) \text{ as }a\downarrow 0,
\end{equation}
where $s_{j,a}$ for each $1\leq j\leq n$ is a point in $a\mathbb{Z}$ that is closest to $s_j$.
\end{theorem}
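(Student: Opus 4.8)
The plan is to prove \eqref{eq:thmmain} by the method of cumulants. Since the target vector $(X(s_1),\dots,X(s_n))$ is centred Gaussian with covariance $K(s_i-s_j)$ given by \eqref{eq:Kdef}, it suffices to show that every joint cumulant of the lattice vector $(X_{L(a)}(s_{1,a}),\dots,X_{L(a)}(s_{n,a}))$ converges, as $a\downarrow0$ with $L=L(a)\to\infty$, to the corresponding Gaussian cumulant: the second joint cumulants to $K(s_i-s_j)$ and all joint cumulants of order $p\ge3$ to $0$. Writing $\langle\,\cdot\,;\cdots;\,\cdot\,\rangle_{a,h}$ for the connected (Ursell) spin correlations under $P^a_h$ and using \eqref{eq:DefX_L}, the order-$p$ joint cumulant is the normalised sum
\begin{equation}
\frac{a^{7p/8}}{(2L)^{p/2}}\sum_{k_1,\dots,k_p\in a\mathbb{Z}\cap[-L,L]}\big\langle\sigma_{(s_{i_1,a},k_1)};\cdots;\sigma_{(s_{i_p,a},k_p)}\big\rangle_{a,h}.
\end{equation}
The two inputs that drive everything are the scaling limit \eqref{eq:Phidef} and the mass gap for $h>0$. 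From \eqref{eq:Phidef}, matching the covariance of $\Phi^{a,h}$ against smooth test functions with a double Riemann sum yields the rescaling $a^{-1/4}\langle\sigma_0;\sigma_{w_a}\rangle_{a,h}\to H(w)$ for each fixed continuum separation $w\ne0$; more generally the lattice order-$p$ connected correlation behaves as $a^{p/8}$ times the order-$p$ connected correlation $u^{\Phi^h}_p$ of $\Phi^h$. From \cite{CJN20a,CJN20b} (or \cite{KR21}) the correlations of $\Phi^h$ cluster exponentially with rate at least $m_1>0$.

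First I would treat the covariance ($p=2$). Using $\langle\sigma_{(s_{i,a},k)};\sigma_{(s_{j,a},k')}\rangle_{a,h}\approx a^{1/4}H(s_i-s_j,k-k')$, the prefactors combine to $a^{7/4}\cdot a^{1/4}=a^2$, and collapsing the double sum by the difference variable $u=k-k'$ gives, in the limit,
\begin{equation}
\frac{1}{2L}\int_{-L}^{L}\!\int_{-L}^{L}H(s_i-s_j,k-k')\,dk\,dk'=\int_{-2L}^{2L}\Big(1-\frac{|u|}{2L}\Big)H(s_i-s_j,u)\,du.
\end{equation}
Since $H$ decays exponentially in $|u|$ for $h>0$, the factor $1-|u|/2L$ produces only an $O(1/L)$ error and the integral converges to $\int_{-\infty}^{\infty}H(s_i-s_j,u)\,du=K(s_i-s_j)$ by \eqref{eq:Kdef}. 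The diagonal terms $k=k'$ (most delicate when $s_i=s_j$, where $H$ is singular) are handled separately: there are $2L/a$ of them, each equal to $1-\langle\sigma_0\rangle_{a,h}^2\le1$, contributing $a^{3/4}(1-\langle\sigma_0\rangle_{a,h}^2)=O(a^{3/4})\to0$, while the integrable short-distance singularity $|u|^{-1/4}$ of $H$ is matched by the off-diagonal lattice sum through a uniform short-distance bound.

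The heart of the argument is that all cumulants of order $p\ge3$ vanish, which is the central-limit mechanism produced by the $1/\sqrt{L}$ normalisation together with clustering. Substituting the scaling $a^{p/8}$ turns the prefactor $a^{7p/8}$ into $a^p$, and the $p$-fold sum becomes $(2L)^{-p/2}\int_{[-L,L]^p}u^{\Phi^h}_p\,dk_1\cdots dk_p$, the integrand being the connected correlation of $\Phi^h$ along the $p$ vertical lines at horizontal coordinates $s_{i_1},\dots,s_{i_p}$. Exponential clustering forces all $p-1$ relative vertical coordinates to remain $O(1)$ for the integrand to be non-negligible, so $\int_{[-L,L]^p}|u^{\Phi^h}_p|=O(L)$; hence the cumulant is $O(L^{1-p/2})\to0$ for $p\ge3$. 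For $p=2$ this same estimate gives the $O(1)$ value computed above, and $p=1$ vanishes by centring, so only the Gaussian second cumulants survive.

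The step I expect to be the main obstacle is making the limits $a\downarrow0$ and $L\to\infty$ rigorous \emph{simultaneously and for an arbitrary rate} $L(a)\to\infty$, rather than as an iterated limit. Taking $a\downarrow0$ first for fixed $L$ would realise $X_L(s)$ as $\Phi^h$ smeared against the singular test function $\frac{1}{\sqrt{2L}}1_{[-L,L]}(y)\delta_s(t)$ (legitimate via Lemma A of \cite{CJN20c}), after which Theorem \ref{thm:contGaus} supplies the $L\to\infty$ limit; but a diagonal argument of this type yields only \emph{some} admissible $L(a)$. To reach every $L(a)\to\infty$ one needs joint control, which forces uniform-in-$a$ bounds on the rescaled connected correlations $a^{-p/8}\langle\sigma;\cdots;\sigma\rangle_{a,h}$ valid down to lattice-scale separations and dominating the Riemann sums uniformly in $L$. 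Supplying these bounds—interpolating between Wu's short-distance power law \cite{Wu66,MW73} and the exponential tail from the mass gap \cite{CJN20a,CJN20b}—and verifying that the near-diagonal contributions are genuinely negligible under the joint limit is the technical core on which the theorem rests.
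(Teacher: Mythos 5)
Your cumulant strategy is a genuinely different route from the paper's, but as written it has a real gap, which you yourself flag and then defer: the vanishing of the cumulants of order $p\ge 3$ rests on two estimates that are neither in the cited literature nor supplied. First, you need exponential clustering of the order-$p$ connected (Ursell) correlations, uniformly in $a$ and down to lattice-scale separations; the mass-gap results of \cite{CJN20a, CJN20b} (and \cite{KR21}) concern the truncated \emph{two-point} function, and while tree-type bounds on higher Ursell functions in terms of two-point functions exist for ferromagnetic systems, establishing them uniformly in $a$ is a substantial piece of work, not a citation. Second, you need pointwise convergence of the rescaled lattice Ursell functions $a^{-p/8}\langle\sigma;\cdots;\sigma\rangle_{a,h}$ to the continuum ones at non-coinciding arguments, together with integrable majorants near the diagonal, to justify the Riemann-sum limits. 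Note that the paper needs the SMM inequality \eqref{eq:Sch}, the GKS inequality \eqref{eq:Gri}, GHS, Wu's theorem, and exponential-moment bounds (Lemma \ref{lem:momconv}) to do exactly this for $p=2$ alone (Proposition \ref{prop:Cov}); the sandwiching argument there exploits positivity and reflection monotonicity of the truncated two-point function, which have no clean analogue for Ursell functions of order $p\ge 3$, whose signs vary. So the ``technical core'' you postpone is not a uniform-bound chase at the end of a complete argument; it \emph{is} the argument.

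The paper's proof is designed precisely to avoid all higher-order estimates. Gaussianity is extracted not from cumulants but from Theorem 1 of \cite{New80}: for FKG variables (hence the restriction to nonnegative coefficients $z_l$, with general signs recovered via the $Y^{\pm}$ decomposition and \cite{HT75}), the joint characteristic function differs from the product of block characteristic functions by at most a sum of cross-covariances. Splitting the vertical interval into roughly $2\sqrt{L(a)}$ identically distributed strips, that error equals $\mathrm{Var}(Y_{L(a)})$ minus the sum of the block variances, and the whole theorem then reduces to the covariance convergence of Propositions \ref{prop:Cov} and \ref{prop:Cov1}. This also disposes of your ``arbitrary rate $L(a)\to\infty$'' concern automatically: by GKS/SMM monotonicity the covariance sums are sandwiched between fixed-$M$ and $L=\infty$ quantities for any such $L(a)$ (Corollary \ref{cor:Cov}), so no diagonal argument is ever needed. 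If you want to salvage your write-up, the realistic path is to keep your $p=2$ computation (the $O(a^{3/4})$ diagonal term and the GHS/Wu short-distance control you allude to are essentially the paper's) and replace the $p\ge 3$ cumulant analysis by this FKG characteristic-function mechanism.
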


The relation between Theorems \ref{thm:contGaus} and \ref{thm:main}, and the results in \cite{CGN16} can be summarized in the following diagram:
\[ \begin{tikzcd}
    \{\sigma_x: x\in a\mathbb{Z}^2\} \arrow{r}{a\downarrow0} \arrow[swap]{rd} &\Phi^h(t,y) \arrow{d}{L\uparrow\infty}\\
                                                                                                                        &X(t),
    \end{tikzcd}
\]
where the right and down arrows represent results in \cite{CGN16} and Theorem \ref{thm:contGaus} respectively, and the diagonal arrow represents Theorem \ref{thm:main}.

\subsection{Results and discussions for general dimension}\label{subsec:alldim}
For general $d\geq2$, we try to derive the desired Gaussian process (denoted by $\hat{X}$) in two steps. In the first step, we construct a Gaussian process $\hat{X}^a(t)$ from the critical Ising model on $a\mathbb{Z}^d$ with fixed magnetic field $\tilde{h}$; in the second step, we choose $\tilde{h}$ as an appropriate function of $a$ and obtain $\hat{X}$ from $\hat{X}^a(t)$ by sending $a\downarrow0$.

On $a\mathbb{Z}^d$, we write $x\in a\mathbb{Z}^d$ as $x=(t,\vec{y})$ where $t\in a\mathbb{Z}$ and $\vec{y}\in a\mathbb{Z}^{d-1}$. For $L>0$ and $s\in \mathbb{R}$, we define
\begin{equation}\label{eq:DefhatX_L}
\hat{X}^a_{L}(s):=\frac{\sum_{\vec{y}\in a\mathbb{Z}^{d-1}\cap [-L,L]^{d-1}}\left[\sigma_{(s_a,\vec{y})}-\langle\sigma_{(s_a,\vec{y})}\rangle\right]}{\sqrt{\text{Var}\left(\sum_{\vec{y}\in a\mathbb{Z}^{d-1}\cap [-L,L]^{d-1}}\sigma_{(s_a,\vec{y})}\right)}},
\end{equation}
where $\langle\cdot\rangle$ denotes the expectation for the critical Ising model on 
$a\mathbb{Z}^d$ with fixed magnetic field $\tilde{h}$; for general $d\geq2$ we do not add 
any subscripts to brackets since the corresponding probability measure should be clear from the context.
Note that $\hat{X}^a_{L}(s)$ is standardized with mean $0$ and variance $1$. For 
$z, w\in a\mathbb{Z}^d$, let $\langle \sigma_z;\sigma_w\rangle$ denote the truncated two-point function, i.e.,
\begin{equation}
\langle \sigma_z;\sigma_w\rangle=\langle \sigma_z\sigma_w\rangle-\langle \sigma_z\rangle\langle \sigma_w\rangle.
\end{equation}
In the first step, we prove
\begin{theorem}\label{thm:gend}
Consider the critical Ising model on $a\mathbb{Z}^d$ with fixed magnetic field $\tilde{h}>0$. Then for any $n\in\mathbb{N}$ and distinct $s_1,\dots,s_n\in a\mathbb{Z}$, we have
\begin{equation}
\left(\hat{X}^a_L(s_1),\dots,\hat{X}^a_L(s_n)\right)\Rightarrow (\hat{X}^a(s_1),\dots,\hat{X}^a(s_n)) \text{ as }L\uparrow \infty,
\end{equation}
where $\{\hat{X}^a(s), s\in a\mathbb{Z}\}$ is a mean zero stationary Gaussian process with covariance function
\begin{equation}
\emph{Cov}(\hat{X}^a(s),\hat{X}^a(t))=\hat{K}^a(t-s)
:=\frac{\sum_{\vec{y}\in a\mathbb{Z}^{d-1}}\langle \sigma_{(0,\vec{0})};\sigma_{(t-s,\vec{y})}\rangle}{\sum_{\vec{y}\in a\mathbb{Z}^{d-1}}\langle \sigma_{(0,\vec{0})};\sigma_{(0,\vec{y})}\rangle} \emph{ for }s,t \in a\mathbb{Z}.
\end{equation}
\end{theorem}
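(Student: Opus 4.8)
The plan is to establish the finite-dimensional convergence in two stages: a direct computation showing that the normalized covariances of the $\hat{X}^a_L(s_j)$ converge to $\hat{K}^a(s_i-s_j)$, and then a central limit theorem for the underlying associated random field that upgrades this to joint asymptotic normality. Throughout, $a$ and $\tilde{h}>0$ are fixed, so one works with a single translation-invariant infinite-volume Gibbs measure (uniqueness at $\tilde{h}\neq 0$ is classical), its FKG/association property, and exponential decay of the truncated two-point function. The last of these holds at fixed lattice spacing and $\tilde{h}>0$ by the Lee--Yang theorem and the ensuing analyticity of the free energy; in particular the transverse sums $G(t):=\sum_{\vec{y}\in a\mathbb{Z}^{d-1}}\langle \sigma_{(0,\vec{0})};\sigma_{(t,\vec{y})}\rangle$ are finite, and since each summand is nonnegative by FKG, they are absolutely convergent with $G(0)\in(0,\infty)$.

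First I would record a reduction. Writing $S_L(s):=\sum_{\vec{y}\in a\mathbb{Z}^{d-1}\cap[-L,L]^{d-1}}\sigma_{(s_a,\vec{y})}$ and $N_L:=|a\mathbb{Z}^{d-1}\cap[-L,L]^{d-1}|$, translation invariance in the $t$-direction makes $\mathrm{Var}(S_L(s))$ independent of $s$; call it $V_L$. Hence every $\hat{X}^a_L(s_j)=(S_L(s_j)-\langle S_L(s_j)\rangle)/\sqrt{V_L}$ carries the \emph{same} normalization, which is convenient below. Expanding the covariance and grouping ordered pairs by their transverse difference $\vec{w}=\vec{y}'-\vec{y}$ gives
\begin{equation}
\mathrm{Cov}(S_L(s),S_L(t))=\sum_{\vec{w}\in a\mathbb{Z}^{d-1}}c_L(\vec{w})\,\langle \sigma_{(0,\vec{0})};\sigma_{(t_a-s_a,\vec{w})}\rangle,
\end{equation}
where $c_L(\vec{w})$ counts the pairs in the box with difference $\vec{w}$ and satisfies $c_L(\vec{w})/N_L\to 1$ for each fixed $\vec{w}$. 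Since the summands are nonnegative and summable, dominated convergence yields $\mathrm{Cov}(S_L(s),S_L(t))/N_L\to G(t-s)$, so $V_L/N_L\to G(0)$ and
\begin{equation}
\mathrm{Cov}(\hat{X}^a_L(s),\hat{X}^a_L(t))=\frac{\mathrm{Cov}(S_L(s),S_L(t))}{V_L}=\frac{\mathrm{Cov}(S_L(s),S_L(t))/N_L}{V_L/N_L}\longrightarrow\frac{G(t-s)}{G(0)}=\hat{K}^a(t-s).
\end{equation}

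The core of the argument is then the central limit theorem. Consider the $\mathbb{R}^n$-valued stationary random field $\vec{\xi}_{\vec{y}}:=(\sigma_{(s_{1,a},\vec{y})}-\langle\sigma_{(s_{1,a},\vec{y})}\rangle,\dots,\sigma_{(s_{n,a},\vec{y})}-\langle\sigma_{(s_{n,a},\vec{y})}\rangle)$ indexed by $\vec{y}\in a\mathbb{Z}^{d-1}$. Its components are centered Ising spins, so the whole field is associated by FKG, has bounded (hence square-integrable) entries, and has absolutely summable covariance matrix with entries $\sum_{\vec{w}}\langle \sigma_{(s_{i,a},\vec{0})};\sigma_{(s_{j,a},\vec{w})}\rangle$ by the exponential-decay input above. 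A multivariate central limit theorem for stationary associated random fields (in the spirit of Newman's normal-fluctuation theorem under the FKG inequalities) then gives
\begin{equation}
\frac{1}{\sqrt{N_L}}\sum_{\vec{y}\in a\mathbb{Z}^{d-1}\cap[-L,L]^{d-1}}\vec{\xi}_{\vec{y}}\ \Rightarrow\ \mathcal{N}(0,\Sigma),\qquad \Sigma_{ij}=G(s_i-s_j),
\end{equation}
where $G(t)=G(-t)$ by reflection symmetry. Dividing by $\sqrt{V_L/N_L}\to\sqrt{G(0)}$ rescales the limiting covariance to $\Sigma_{ij}/G(0)=\hat{K}^a(s_i-s_j)$, which is exactly the asserted joint limit, identifying $\hat{X}^a$ as the mean zero stationary Gaussian process with covariance $\hat{K}^a$.

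I expect the main obstacle to be the central limit theorem rather than the covariance bookkeeping. The subtlety is that the Cram\'er--Wold combinations $\sum_j\lambda_j\hat{X}^a_L(s_j)$ correspond to sums of $\xi_{\vec{y}}=\sum_j\lambda_j(\sigma_{(s_{j,a},\vec{y})}-\langle\cdot\rangle)$ with coefficients $\lambda_j$ of mixed sign; such $\xi_{\vec{y}}$ are \emph{not} monotone in the spins and hence not associated, so one cannot simply feed them into a scalar associated CLT. This is precisely why the vector-valued formulation is needed: one applies the CLT to the genuinely associated field $\{\vec{\xi}_{\vec{y}}\}$ and reads off all linear combinations from the limiting Gaussian vector. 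Verifying the hypotheses of the multivariate associated CLT---stationarity, finite second moments, and summability of the covariance matrix---is where exponential decay of the truncated two-point function at fixed $a$ and $\tilde{h}>0$ is essential. Should a convenient multivariate associated CLT be unavailable, an alternative route is the method of cumulants: show that all joint cumulants of $N_L^{-1/2}\sum_{\vec{y}}\vec{\xi}_{\vec{y}}$ of order at least $3$ vanish as $L\uparrow\infty$, which would follow from exponential decay of the higher-order Ursell functions of the Ising model.
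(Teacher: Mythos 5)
Your proposal is correct, and it resolves the central difficulty by a genuinely different device from the paper's. The shared skeleton: both arguments first establish convergence of the normalized covariances --- your pair-counting/dominated-convergence computation is essentially the fact the paper extracts from (26) in Lemma 4 of \cite{New80}, with the same exponential-decay input at fixed $a$ and $\tilde h>0$ (the paper cites \cite{LP68,FR17}; your Lee--Yang argument is the \cite{LP68} route) --- and both then rest the Gaussianity on Newman's CLT theory for associated (FKG) fields. The divergence is exactly at the mixed-sign Cram\'er--Wold issue you flag. The paper proves the characteristic-function limit only for coefficients $(z_1,\dots,z_n)\in[0,\infty)^n$, so that the transverse field $\vec y\mapsto\sum_j z_j\sigma_{(s_j,\vec y)}$ is an increasing function of the spins, hence associated, and Theorem 2 of \cite{New80} applies as a \emph{scalar} CLT; it then removes the sign restriction by the argument reproduced in the proof of Theorem \ref{thm:main}: split into the positive- and negative-coefficient parts $Y^{\pm}$, observe that every $cY^++dY^-$ with $c,d\ge0$ is asymptotically Gaussian with the correct variance, and invoke Theorem 3 of \cite{HT75} to conclude that any subsequential limit of $(Y^+,Y^-)$ is bivariate normal, hence determined. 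You instead appeal to a multivariate CLT for the jointly associated $\mathbb{R}^n$-valued field $\{\vec\xi_{\vec y}\}$ and read off all linear combinations, mixed signs included, from the limiting Gaussian vector. Such theorems do exist (Newman's vector-valued versions in his 1984 survey, the Burton--Dabrowski--Dehling invariance principle for weakly associated random vectors, the vector CLTs in Bulinski--Shashkin's book), and your verification of their hypotheses --- joint association, stationarity in $\vec y$, bounded spins, absolutely summable cross-covariances --- is complete, so the route is sound. Be aware, though, that you are leaning on a heavier black box: the standard proofs of those multivariate statements factorize the characteristic function over spatial blocks via a Lipschitz-function extension of Newman's covariance inequality, i.e., they internalize precisely the step the paper externalizes to \cite{HT75}. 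Your route buys a one-step argument with no sign restriction and no determination lemma; the paper's buys self-containedness, using only the two scalar results it explicitly cites. If you write yours up, replace ``in the spirit of'' with a precise citation of a vector-valued associated CLT; your cumulant fallback would also work at $\tilde h>0$ (all Ursell functions decay exponentially there), but it is more work than either route.
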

\begin{remark}
$\hat{K}^a(s)$ is non-increasing as a function of~$|s|$. This follows from the monotonicity of $\langle \sigma_{(0,\vec{0})}\sigma_{(t,\vec{y})}\rangle$ (and hence also $\langle \sigma_{(0,\vec{0})};\sigma_{(t,\vec{y})}\rangle$) in $|t|$ (see \cite{Sch77} and~\cite{MMS77}).
\end{remark}

We recall that the correlation function for the critical Ising model on $\mathbb{Z}^d$ with 
$\tilde{h}=0$ is expected to scale in the following way
\begin{equation}
\langle\sigma_{\vec{0}}\sigma_{\vec{x}}\rangle\approx |\vec{x}|^{-d+2-\eta} \text{ for }\vec{0}, \vec{x}\in \mathbb{Z}^d \text{ and large } |\vec{x}|,
\end{equation}
where $|\vec{x}|:=\|\vec{x}\|_2$ denotes the Euclidean distance and $\eta\geq 0$. 
It is known that $\eta=1/4$ when $d=2$ \cite{Wu66, MW73} and the conjecture for 
$d\geq 4$ is $\eta=0$.  If we take $\tilde{h}=a^{(d+2-\eta)/2}h$ for some $h>0$, 
we conjecture that the following limit exists
\begin{equation}\label{eq:conj}
\hat{K}(s):=\lim_{a\downarrow 0} \hat{K}^a(s_a)=\lim_{a\downarrow 0}\frac{\sum_{\vec{y}\in a\mathbb{Z}^{d-1}}\langle\sigma_{(0,\vec{0})};\sigma_{(s_a,\vec{y})}\rangle}{\sum_{\vec{y}\in a\mathbb{Z}^{d-1}}\langle\sigma_{(0,\vec{0})};\sigma_{(0,\vec{y})}\rangle}, \text{ for }s\in\mathbb{R}.
\end{equation}
Now it is easy to prove (by showing the convergence of the corresponding characteristic functions)
\begin{proposition}
Under the assumption that the limit in \eqref{eq:conj} exists, we have
\begin{equation}
(\hat{X}^a(s_1),\dots,\hat{X}^a(s_n))\Rightarrow (\hat{X}(s_1),\dots,\hat{X}(s_n)) \text{ as }a\downarrow0,
\end{equation}
where $\{\hat{X}(s):s\in\mathbb{R}\}$ is a mean zero stationary Gaussian process with covariance function
\begin{equation}
\emph{Cov}(\hat{X}(s), \hat{X}(t))=\hat{K}(t-s).
\end{equation}
\end{proposition}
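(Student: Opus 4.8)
The plan is to use the fact that for every fixed $a$ the random vector $(\hat{X}^a(s_{1,a}),\dots,\hat{X}^a(s_{n,a}))$ is \emph{exactly} a centered Gaussian vector by Theorem~\ref{thm:gend} (here $s_{j,a}$ denotes the lattice point in $a\mathbb{Z}$ closest to $s_j$, which is the natural meaning of $\hat{X}^a(s_j)$ since $\hat{X}^a$ is indexed by $a\mathbb{Z}$). Its law is therefore determined entirely by its covariance matrix $\Sigma^a$, the $n\times n$ matrix with entries $\Sigma^a_{jk}=\hat{K}^a(s_{j,a}-s_{k,a})$, and its characteristic function is $\phi^a(\xi)=\exp(-\tfrac12\,\xi^{\top}\Sigma^a\xi)$ for $\xi\in\mathbb{R}^n$. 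The whole problem thus reduces to showing that $\Sigma^a$ converges entrywise, as $a\downarrow 0$, to a matrix $\Sigma$ that is itself a valid covariance matrix, after which L\'evy's continuity theorem finishes the argument.

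First I would establish the entrywise convergence $\Sigma^a_{jk}\to\hat{K}(s_j-s_k)=:\Sigma_{jk}$. The hypothesis \eqref{eq:conj} directly gives $\hat{K}^a((s_j-s_k)_a)\to\hat{K}(s_j-s_k)$, so the only genuine point to check is that replacing the lattice argument $(s_j-s_k)_a$ by $s_{j,a}-s_{k,a}$ does not affect the limit; these two lattice points need not coincide, but both lie within $O(a)$ of $s_j-s_k$. To handle this I would invoke the monotonicity recorded in the Remark following Theorem~\ref{thm:gend}, namely that $\hat{K}^a$ is non-increasing in $|s|$. Writing $d=s_j-s_k$ and $t_a=s_{j,a}-s_{k,a}\to d$, for any $u<|d|<v$ one has $u_a\le |t_a|\le v_a$ for all small $a$, whence $\hat{K}^a(v_a)\le\hat{K}^a(t_a)\le\hat{K}^a(u_a)$; letting $a\downarrow 0$ and then $u\uparrow|d|$, $v\downarrow|d|$ pins $\hat{K}^a(t_a)$ to $\hat{K}(d)$ at every point of continuity of the (monotone, hence continuous off a countable set) limit $\hat{K}$. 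This is the one place where care is needed, and the monotonicity dispatches it cleanly.

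Next I would verify that the limiting matrix $\Sigma$ is a legitimate covariance matrix, so that $\phi(\xi):=\exp(-\tfrac12\,\xi^{\top}\Sigma\xi)$ is a genuine Gaussian characteristic function. This is immediate: each $\Sigma^a$ is positive semi-definite, being a covariance matrix, and the cone of positive semi-definite matrices is closed under entrywise limits. Consequently $\phi^a(\xi)\to\phi(\xi)$ for every $\xi\in\mathbb{R}^n$, and since $\phi$ is continuous at the origin, L\'evy's continuity theorem gives $(\hat{X}^a(s_{1,a}),\dots,\hat{X}^a(s_{n,a}))\Rightarrow\mathcal{N}(0,\Sigma)$, which is precisely the asserted limit $(\hat{X}(s_1),\dots,\hat{X}(s_n))$.

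Finally, to make sense of the limiting object as a bona fide mean-zero stationary Gaussian process I would check Kolmogorov consistency. The function $s\mapsto\hat{K}(s)$ is positive semi-definite as a pointwise limit of the positive semi-definite lattice covariances $\hat{K}^a$, it is even (a limit of even functions), and it satisfies $\hat{K}(0)=1$ because each $\hat{X}^a$ is standardized, so the prescription $\mathrm{Cov}(\hat{X}(s),\hat{X}(t))=\hat{K}(t-s)$ specifies a consistent family of centered Gaussian finite-dimensional laws, stationary since the covariance depends on $t-s$ alone. I do not expect any serious obstacle: the entire argument is soft, and the only delicate point is the lattice-shift bookkeeping of the second paragraph, which is exactly what the monotonicity of $\hat{K}^a$ is for.
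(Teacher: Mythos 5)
Your overall route is exactly the paper's: the paper's entire proof of this proposition is the parenthetical remark that one shows convergence of the corresponding characteristic functions, and your expansion of that remark---exact Gaussianity of the lattice vectors from Theorem~\ref{thm:gend}, entrywise convergence of the covariance matrices, closedness of the positive semi-definite cone under entrywise limits, L\'evy's continuity theorem, and the Kolmogorov consistency check for the limiting process---is the intended argument and is correct in all of those steps. The comparison therefore comes down to the one step you yourself isolate, the replacement of $(s_j-s_k)_a$ by $s_{j,a}-s_{k,a}$, a mismatch the paper silently ignores.

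There your fix is not complete. The monotonicity squeeze gives only
\[
\lim_{v\downarrow|d|}\hat K(v)\;\le\;\liminf_{a\downarrow0}\hat K^a(s_{j,a}-s_{k,a})\;\le\;\limsup_{a\downarrow0}\hat K^a(s_{j,a}-s_{k,a})\;\le\;\lim_{u\uparrow|d|}\hat K(u),\qquad d=s_j-s_k,
\]
so it pins the limit to $\hat K(d)$ only if $\hat K$ is continuous at $|d|$. That $\hat K$, being monotone, is continuous off a countable set is of no use here: the points $s_1,\dots,s_n$ are fixed in advance, and nothing in the hypothesis---the existence of the pointwise limit \eqref{eq:conj} along the rounded sequences $s_a$---excludes that some $|s_j-s_k|$ is a jump point. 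Indeed, all the properties your argument uses (evenness, $\hat K^a(0)=1$, monotonicity in $|s|$, and \eqref{eq:conj}) are satisfied by the model family $\hat K^a(m)=1$ for $|m|\le (r_0)_a$ and $\hat K^a(m)=c<1$ otherwise, for which $\hat K^a\bigl((r_0)_a+a\bigr)\to c\neq 1=\hat K(r_0)$; and since $s_{j,a}-s_{k,a}$ genuinely differs from $(s_j-s_k)_a$ by $\pm a$ along subsequences of $a$ for generic $s_j,s_k$, no argument using only those properties can close the step at a jump point. As written, then, your proof establishes the proposition only when every $|s_j-s_k|$ is a continuity point of $\hat K$. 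The natural repair is to read \eqref{eq:conj} in the stronger (and surely intended) sense that $\hat K^a(m_a)\to\hat K(s)$ for every lattice sequence $m_a\to s$, after which your rounding paragraph becomes unnecessary; alternatively one must add continuity of $\hat K$ as a hypothesis, since it does not follow from the stated assumption (and cannot be extracted from positive semi-definiteness of $\hat K$ on $\mathbb{R}$ without first resolving the very convergence in question).
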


\begin{remark}\label{rem:comm}
From \eqref{eq:conj} and Remark \ref{rem:Cov} below, one can see that $\hat{K}(s)=K(s)/K(0)$ when $d=2$. Therefore, $\hat{X}(s)\overset{d}{=}X(s)/\sqrt{K(0)}$ when $d=2$.
\end{remark}

As mentioned in \cite{CJN20c}, for $d=3$, the covariance function $\hat{K}(t)$ for small $t$ 
would be nondifferentiable at $t=0$, like in Theorem \ref{thm:HKsmalldist} but with 
$\hat{K}(0)-\hat{K}(\epsilon)$ behaving like $\epsilon^{1-\eta}$ as $\epsilon\downarrow 0$,
rather than $\epsilon^{3/4}$, with $\eta$ the corresponding critical exponent for $d=3$. For 
$d>4$, $\hat{K}$ would be differentiable while for $d=4$, there is the possibility of logarithmic behavior.

The diagram right after Theorem \ref{thm:main} and Remark \ref{rem:comm} imply that, when $d=2$, 
the two limits ``$L\uparrow\infty$" and ``$a\downarrow 0$" commute. This should be 
contrasted with a result and a conjecture for the high dimensional Ising model in \cite{CJN21}. 
In Remark 3 of \cite{CJN21}, it was proved that for large $d$, the limit of the near-critical 
magnetization field on $\Lambda_L^a:=[-L,L]^d\cap a\mathbb{Z}^d$, $\Phi^h_{\Lambda_L}$, 
converges (after subtracting its mean) as $L\uparrow\infty$ in distribution to a \textit{massless} Gaussian free 
field on $\mathbb{R}^d$. But Conjecture 1 of \cite{CJN21} says that the near-critical 
magnetization field on $a\mathbb{Z}^d$ converges (after subtracting its mean) as $a\downarrow 0$ in distribution 
to a \textit{massive} Gaussian free field on $\mathbb{R}^d$. So in that case, the two limits 
``$L\uparrow\infty$" and ``$a\downarrow 0$" should not commute.

\section{Proof of results in Section \ref{sec:new}}\label{sec:proofs}
The main ingredients for the proof of Theorem \ref{thm:main} are the convergence of the covariance $\text{Cov}(X_{L(a)}(s), X_{L(a)}(t))$ and an inequality for FKG systems from \cite{New80}.  Since $H$ is a function only of the radial variable, we define
\begin{equation}
\hat{H}(\sqrt{t^2+y^2}):=H(t,y), \text{ for any } (t,y)\in\mathbb{R}^2.
\end{equation}
Recall that $s_a$ is a point in $a\mathbb{Z}$ that is closest to $s$. For $z\in\mathbb{R}^2$, let $z_a$ be a point in $a\mathbb{Z}^2$ that is closest to $z$.
\begin{proposition}\label{prop:Cov}
Fix $h\geq 0$. For any $L\in(0,\infty)$ and any $s,t\in\mathbb{R}$, we have
\begin{align}
&\lim_{a\downarrow 0} a^{3/4}\sum_{k\in a\mathbb{Z}\cap [-L,L]}\langle \sigma_{(s_a,0)}; \sigma_{(t_a,k)} \rangle_{a,h} =\int_{-L}^L H(t-s,y)dy,\label{eq:Covlimit}\\
&\lim_{a\downarrow 0} a^{-1/4}\langle \sigma_{z_a};\sigma_{w_a}\rangle_{a,h} =\hat{H}(|z-w|), \text{ for all } z\neq w\in\mathbb{R}^2.\label{eq:ttlimit}
\end{align}
\end{proposition}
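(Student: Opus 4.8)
The two displays are a pointwise limit and its integrated counterpart, so the plan is to prove the pointwise statement \eqref{eq:ttlimit} first and then obtain \eqref{eq:Covlimit} by a dominated Riemann-sum argument. Write $K_a(z,w):=a^{-1/4}\langle\sigma_{z_a};\sigma_{w_a}\rangle_{a,h}$; by translation invariance this depends only on $z-w$, so \eqref{eq:ttlimit} asserts that $K_a(z,w)\to\hat H(|z-w|)$ for $z\neq w$, while \eqref{eq:Covlimit} asserts that $\sum_k a\,K_a\big((s_a,0),(t_a,k)\big)\to\int_{-L}^L H(t-s,y)\,dy$.

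For $h=0$ the limit \eqref{eq:ttlimit} is Wu's scaling limit of the critical two-point function \cite{Wu66,MW73}; lattice scaling gives $K_a(z,w)=|(z-w)/a|^{1/4}\langle\sigma_{0}\sigma_{(z-w)/a}\rangle_{1,0}\,|z-w|^{-1/4}$, which both yields the explicit limit $\hat H^0(r)=C_2 r^{-1/4}$ and, since $|x|^{1/4}\langle\sigma_0\sigma_x\rangle_{1,0}$ is bounded uniformly in $x$, the uniform bound $K_a(z,w)\le C|z-w|^{-1/4}$ for all small $a$. I would record this bound for use below. For $h>0$ the route is through the field convergence $\Phi^{a,h}\Rightarrow\Phi^h$ of \cite{CGN16}. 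Pairing against smooth test functions and using $\Phi^{a,h}(f)=a^{15/8}\sum_x\sigma_x f(x)$ gives $\mathrm{Cov}\big(\Phi^{a,h}(f),\Phi^{a,h}(g)\big)=a^{4}\sum_{x,w}K_a(x,w)f(x)g(w)$, so convergence of these covariances to $\iint H(x-w)f(x)g(w)\,dx\,dw$ is exactly weak convergence of $K_a$ to $\hat H$. That covariance convergence follows from the distributional convergence together with uniform integrability of $\Phi^{a,h}(f)^2$, which is supplied by the uniform moment bounds of \cite{CGN16} (ultimately from Lee-Yang and GHS). It remains to upgrade weak convergence to pointwise convergence: here I would invoke the monotonicity of $\langle\sigma_{(0,\vec{0})};\sigma_{(t,y)}\rangle_{a,h}$ in $|t|$ and $|y|$ \cite{Sch77,MMS77}, which makes $K_a$ monotone in each coordinate of $z-w$, and combine it with the continuity of the radial limit $\hat H$. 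A coordinatewise-monotone family converging weakly to a continuous limit converges pointwise at every point (sandwich $K_a$ between its averages over small cells and let the cell size shrink), giving \eqref{eq:ttlimit} at each fixed $z\neq w$.

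To deduce \eqref{eq:Covlimit}, I would compare the rescaled sum $\sum_k a\,K_a\big((s_a,0),(t_a,k)\big)$ with the genuine Riemann sum $\sum_k a\,H(t-s,k)$ of the fixed function $y\mapsto H(t-s,y)$, which converges to $\int_{-L}^L H(t-s,y)\,dy$ because that function is monotone in $|y|$ and integrable. The difference $\sum_k a\,[K_a-H(t-s,k)]$ tends to $0$ by a dominated-convergence argument for the counting measures $a\sum_k\delta_k$: the summand tends to $0$ pointwise for each fixed $k$ off the diagonal by \eqref{eq:ttlimit}, and both $K_a$ and $H(t-s,\cdot)$ are dominated by an integrable majorant. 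The only delicate point, present solely when $t=s$, is the singularity $H(0,y)\sim C_2|y|^{-1/4}$ at $y=0$; to dominate $K_a$ there I would use the GHS inequality \cite{GHS70} to get $\langle\sigma_z;\sigma_w\rangle_{a,h}\le\langle\sigma_z\sigma_w\rangle_{a,0}$ for $h\ge 0$, whence $K_a\le C|z-w|^{-1/4}$ by the uniform $h=0$ bound recorded above, an integrable majorant on the strip. The coincident term $k=0$ is handled separately, contributing $a^{3/4}(1-\langle\sigma\rangle_{a,h}^2)\to 0$.

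The main obstacle is the last step of the $h>0$ analysis: promoting the weak convergence of $K_a$---which is all that the field-level convergence directly delivers---to genuine pointwise convergence. This is exactly where the correlation inequalities are indispensable, since without the monotonicity of the two-point function one could not exclude oscillations invisible to smooth test functions, and it is also where the continuity of the limiting covariance $\hat H$ enters essentially. Controlling the diagonal singularity in \eqref{eq:Covlimit} is the other sensitive point, but the GHS comparison to the exactly solvable $h=0$ model reduces it to Wu's explicit asymptotics.
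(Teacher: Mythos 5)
Your overall plan shares the paper's main ingredients: the weak convergence of smeared covariances (the paper's Lemma~\ref{lem:momconv}, proved via Lee--Yang/GHS moment bounds), a sandwich of point values between cell averages using SMM monotonicity and GKS positivity, and the GHS-plus-Wu bound $a^{-1/4}\langle\sigma_{z_a};\sigma_{w_a}\rangle_{a,h}\le C|z-w|^{-1/4}$ to control the diagonal singularity. Your derivation of \eqref{eq:Covlimit} for $s\neq t$ from \eqref{eq:ttlimit} by dominated convergence is sound, modulo the minor point that ``for each fixed $k$'' should be phrased via the step functions $y\mapsto a^{-1/4}\langle\sigma_{(s_a,0)};\sigma_{(t_a,y_a)}\rangle_{a,h}$, since the lattice varies with $a$. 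The genuine gap is in your key upgrading step: the asserted principle that a coordinatewise-monotone family converging weakly to a continuous limit converges pointwise \emph{at every point} is false precisely at difference vectors lying on a coordinate axis, and those are exactly the points you cannot avoid --- \eqref{eq:ttlimit} is claimed for all $z\neq w$, and in the case $s=t$ of \eqref{eq:Covlimit} every summand has difference vector $(0,k)$ on the vertical axis, so your dominated-convergence step needs a.e.\ convergence exactly there.

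Here is why it fails. Writing $G_a$ for the rescaled truncated two-point function as a function of the difference vector, coordinatewise monotonicity makes axis values maximal along lines, $G_a(0,y)=\max_{v_1}G_a(v_1,y)$, so inequality \eqref{eq:Sch} forces no set of positive measure to dominate an axis value: the liminf half of your sandwich survives, but the limsup half does not. Concretely, $G_a(v):=\hat H(|v|)+1_{\{|v_1|\le 1,\ |v_2|\le\delta_a\}}$ with $\delta_a\downarrow0$ is nonnegative, nonincreasing in $|v_1|$ and in $|v_2|$, obeys a $C|v|^{-1/4}$ bound, and converges weakly (against products of test functions) to the continuous limit $\hat H$, yet $G_a(v_1,0)\to\hat H(|v_1|)+1$ on the axis; so the inequalities you invoke cannot rule out the oscillation you need to rule out. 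The missing ingredient is part (b) of the SMM inequality (reflections in lines of slope $\pm1$), which gives $\langle\sigma_{(0,0)};\sigma_{(u_1,u_2)}\rangle_{a,h}\ge\langle\sigma_{(0,0)};\sigma_{(0,u_1+u_2)}\rangle_{a,h}$ for $u_1,u_2\ge 0$: an axis value is then dominated by the values on a full triangle $\{u_1,u_2\ge0,\ u_1+u_2\le y\}$ of off-axis vectors, which restores the upper-bound half of the sandwich. This is exactly how the paper handles what it calls the ``more involved'' case $s=t$ --- see \eqref{eq:sumscomp}, proved by a slope-one reflection, combined with the same GHS--Wu control near the origin that you use. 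Without the diagonal reflections, your proof of \eqref{eq:ttlimit} at axis points, and hence of \eqref{eq:Covlimit} at $s=t$, does not close.
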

\begin{remark}
The limit \eqref{eq:ttlimit} generalizes a classical result by Wu, which corresponds to $h=0$ in \eqref{eq:ttlimit}. See \cite{Wu66, MW73} (also \cite{CHI15} and Theorem~3.1 of~\cite{Che18}) for Wu's result.
\end{remark}
We recall two inequalities which will be important for the proof of Proposition~\ref{prop:Cov}. 
The first one is the SMM (for Schrader, Messager and Miracle-Sole) inequality \cite{Sch77, MMS77}: in a region $\Lambda$ with reflection symmetry, the correlation 
$\langle\prod_{x\in A}\sigma_x\prod_{x\in B}\sigma_x\rangle_{\Lambda,a,h}$ with $A$ and $B$ 
on the same side of a reflection plane can only decrease when $B$ is replaced 
by its reflected image $\bar{B}$, i.e.,
\begin{equation}
	\left\langle\prod_{x\in A}\sigma_x\prod_{x\in B}\sigma_x\right\rangle_{\Lambda,a,h}\geq \left\langle\prod_{x\in A}\sigma_x\prod_{x\in \bar{B}}\sigma_x\right\rangle_{\Lambda,a,h}.
\end{equation}
In the infinite volume limit on $a\mathbb{Z}^2$, this inequality holds for reflections with respect to
\begin{enumerate}[(a)]
	\item lines parallel to the principal axes and passing through 
	points in $(\frac{1}{2}a\mathbb{Z})\times(\frac{1}{2}a\mathbb{Z})$ --- in 
	particular for any $z=(z_1,z_2), w=(w_1,w_2)\in a\mathbb{Z}^2$,
	\begin{equation}\label{eq:Sch}
		\langle \sigma_0\sigma_z\rangle_{a,h}\leq \langle \sigma_0\sigma_w\rangle_{a,h}  \text{ if } z_1=w_1 \text{ and } |z_2|\geq |w_2|;
	\end{equation}
   \item ``diagonal" lines, i.e., lines with slope $\pm 1$ and passing through points in $a\mathbb{Z}^2$.
\end{enumerate}
The SMM inequality also holds for infinite-volume truncated two-point functions 
since the one-point function is constant. The second inequality is the GKS inequality (see 
Corollary~1 of \cite{Gri67b} and also \cite{KS68}), which says that
\begin{equation}\label{eq:Gri}
	\langle \sigma_z;\sigma_w\rangle_{a,h} \geq 0 \text{ for any }z, w\in a\mathbb{Z}^2.
\end{equation}

We will also use the following lemma about convergence of moments.
\begin{lemma}\label{lem:momconv}
	Fix $h\geq0$. Suppose $f\in C_c^{\infty}(\mathbb{R}^2)$, the space of $C^{\infty}$ functions on $\mathbb{R}^2$ whose support is compact. Then we have
	\begin{equation}\label{eq:momconv}
		\lim_{a\downarrow 0}\langle \Phi^{a,h}(f)\rangle_{a,h} =\mathbb{E}\Phi^h(f).
	\end{equation}
Moreover, for any $f,g\in C_c^{\infty}(\mathbb{R}^2)$, we have
\begin{equation}\label{eq:2momconv}
	\lim_{a\downarrow 0}\left\langle\Phi^{a,h}(f)\Phi^{a,h}(g)\right\rangle_{a,h}=\mathbb{E}\left( \Phi^h(f)\Phi^f(g)\right).
\end{equation}
\end{lemma}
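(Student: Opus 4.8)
The plan is to deduce both limits from the field-level convergence \eqref{eq:Phidef}, upgrading weak convergence of the paired random variables to convergence of their first and second moments by a uniform integrability argument, and then to reduce the bilinear statement \eqref{eq:2momconv} to the diagonal case by polarization.

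First I would record that, since $f\in C_c^\infty(\mathbb{R}^2)$ lies in the dual spaces for which the pairing \eqref{eq:Phioff} is defined, the map $\Phi\mapsto\Phi(f)$ is a continuous linear functional; hence \eqref{eq:Phidef} (equivalently, convergence of the characteristic functionals) gives $\Phi^{a,h}(f)\Rightarrow\Phi^h(f)$ as real-valued random variables. Testing against linear combinations $\alpha f+\beta g$ and using the linearity of $\Phi^{a,h}$ in the test function yields the joint convergence $(\Phi^{a,h}(f),\Phi^{a,h}(g))\Rightarrow(\Phi^h(f),\Phi^h(g))$ as well. Note that $\mathbb{E}\Phi^h(f)$ and $\mathbb{E}\Phi^h(f)^2$ are finite, since $f$ is a legitimate test function for $\Phi^h$ and the moment generating function of $\Phi^h(f)$ is analytic (as recorded in the excerpt via Lee--Yang \cite{LY52} and GHS \cite{GHS70}).

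Weak convergence alone does not give \eqref{eq:momconv}, so I would upgrade it using uniform integrability. The key input is a moment bound uniform in $a$, namely $\sup_{0<a\le 1}\langle|\Phi^{a,h}(f)|^p\rangle_{a,h}<\infty$ for some $p>2$, which I would extract from the correlation estimates of \cite{CGN15, CGN16} (the same $n$-point bounds, valid for $h\ge 0$, that underlie the proof of \eqref{eq:Phidef}). Such a bound makes both $\{\Phi^{a,h}(f)\}$ and $\{\Phi^{a,h}(f)^2\}$ uniformly integrable, so weak convergence promotes to convergence of the first moment, giving \eqref{eq:momconv}, and of the second moment, giving $\langle\Phi^{a,h}(f)^2\rangle_{a,h}\to\mathbb{E}\Phi^h(f)^2$ for every $f\in C_c^\infty(\mathbb{R}^2)$.

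Finally, for \eqref{eq:2momconv} I would polarize: writing $\Phi^{a,h}(f)\Phi^{a,h}(g)=\tfrac12[\Phi^{a,h}(f+g)^2-\Phi^{a,h}(f)^2-\Phi^{a,h}(g)^2]$ and applying the diagonal second-moment convergence to $f+g$, $f$, and $g$ (all in $C_c^\infty(\mathbb{R}^2)$) yields $\langle\Phi^{a,h}(f)\Phi^{a,h}(g)\rangle_{a,h}\to\tfrac12[\mathbb{E}\Phi^h(f+g)^2-\mathbb{E}\Phi^h(f)^2-\mathbb{E}\Phi^h(g)^2]=\mathbb{E}(\Phi^h(f)\Phi^h(g))$. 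I expect the only genuine obstacle to be the uniform-in-$a$ moment bound of the third step: at the critical renormalization exponent $15/8$ one must verify that the higher moments of $\Phi^{a,h}(f)$ do not blow up as $a\downarrow 0$, which is precisely where the quantitative correlation bounds of \cite{CGN15, CGN16} enter; the remaining steps are soft.
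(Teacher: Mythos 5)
Your outline reproduces the paper's proof structure exactly: weak convergence of $\Phi^{a,h}(f)$ from \eqref{eq:Phidef}, an upgrade to moment convergence via uniform integrability, and polarization through $f+g$ to get \eqref{eq:2momconv}. The gap is in the one step you yourself flag as non-soft: the uniform-in-$a$ bound $\sup_{0<a\leq 1}\langle |\Phi^{a,h}(f)|^p\rangle_{a,h}<\infty$ for some $p>2$, which you propose to extract from ``the same $n$-point bounds, valid for $h\ge 0$, that underlie the proof of \eqref{eq:Phidef}.'' Such bounds are not available in that form. The $n$-point correlation estimates of \cite{CGN15} are at criticality ($h=0$) only, and GKS monotonicity in the external field runs in the wrong direction for your purpose: it gives $\langle\sigma_{x_1}\cdots\sigma_{x_n}\rangle_{a,h}\geq\langle\sigma_{x_1}\cdots\sigma_{x_n}\rangle_{a,0}$, so critical estimates yield lower, not upper, bounds on moments at $h>0$. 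Moreover, the $h\neq 0$ convergence in \cite{CGN16} is itself established through Lee--Yang analyticity and GHS-type arguments rather than through $h>0$ correlation bounds, so pointing to ``the estimates underlying \eqref{eq:Phidef}'' does not produce the input you need.

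The transfer from $h>0$ to $h=0$ estimates is precisely the mathematical content of the paper's proof, and it is done with two specific correlation inequalities rather than with $n$-point bounds. First, the GKS inequalities absorb both the test function and the external field into a constant field on a box $\Lambda$ containing the support of $f$, with plus boundary conditions: $\left\langle \exp\left(t\Phi^{a,h}(f)\right)\right\rangle_{a,h}\leq \left\langle \exp\left(\Phi^{a,0}\left((|t|\|f\|_{\infty}+h)1_{\Lambda}\right)\right)\right\rangle_{\Lambda,a,0}^+$. Second, the GHS inequality bounds that exponential moment by the exponential of mean plus half the variance, i.e.\ $\left\langle \exp\left(\Phi^{a,0}(M1_{\Lambda})\right)\right\rangle_{\Lambda,a,0}^{+}\leq\exp\left[\langle\Phi^{a,0}(M1_{\Lambda})\rangle_{\Lambda,a,0}^++\tfrac{1}{2}\mathrm{Var}_{\Lambda,a,0}^+\left(\Phi^{a,0}(M1_{\Lambda})\right)\right]$, and then Proposition B.1 of \cite{CGN15} bounds this mean and variance uniformly as $a\downarrow 0$. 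Those first- and second-order plus-boundary-condition bounds at $h=0$ are the only estimates from \cite{CGN15} that actually enter. Once this uniform exponential-moment bound is in place, your uniform-integrability and polarization steps go through verbatim (they are the paper's own). So the proposal is right in outline, but the single step carrying the substance of the lemma is missing, and the source you name for it would not supply it as described.
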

\begin{remark}
	Lemma \ref{lem:momconv} actually holds for any bounded $f,g$ in the dual of the Sobolev 
	space $\mathcal{H}^{-3}(\mathbb{R}^2)$. In our applications, $f$ and $g$ 
	will be indicator functions of some bounded regions in~$\mathbb{R}^2$. 
\end{remark}
\begin{proof}[Proof of Lemma \ref{lem:momconv}]
	By Theorem 1.2 of \cite{CGN15} and Theorem 1.4 of \cite{CGN16}, we have
	\begin{equation}\label{eq:Phifconv}
		\Phi^{a,h}(f)\Rightarrow \Phi^h(f).
	\end{equation}
So \eqref{eq:momconv} follows if we can show that
\begin{equation}\label{eq:mombd}
	\left\langle \exp\left(t\Phi^{a,h}(f)\right)\right\rangle_{a,h} \text{ is bounded for any }t\in\mathbb{R}
\end{equation}
(uniformly as $a \downarrow 0$). 
We may choose a bounded $\Lambda$ such that the support of $f$ is contained in $\Lambda$. Then the GKS inequalities \cite{Gri67, KS68} imply that
\begin{equation}\label{eq:mombd1}
	\left\langle \exp\left(t\Phi^{a,h}(f)\right)\right\rangle_{a,h}\leq \left\langle \exp\left(\Phi^{a,0}\left((|t|\|f\|_{\infty}+h)1_{\Lambda}\right)\right)\right\rangle_{\Lambda,a,0}^+,
\end{equation}
where $\langle\cdot\rangle_{\Lambda,a,0}^+$ is the expectation of the critical Ising model on $\Lambda\cap a\mathbb{Z}^2$ with plus boundary conditions. The GHS inequality \cite{GHS70} gives that, for any $M\in\mathbb{R}$,
\begin{equation}\label{eq:mombd2}
	\left\langle \exp\left(\Phi^{a,0}(M1_{\Lambda})\right)\right\rangle_{\Lambda,a,0}^{+}\leq\exp\left[\langle\Phi^{a,0}(M1_{\Lambda})\rangle_{\Lambda,a,0}^++\frac{1}{2}\text{Var}_{\Lambda,a,0}^+(\Phi^{a,0}(M1_{\Lambda}))\right] \, .
\end{equation}
By Proposition B.1 in Appendix B.1 of \cite{CGN15}, which bounds the mean and 
variance in~\eqref{eq:mombd2},  
we see that \eqref{eq:mombd1} and \eqref{eq:mombd2} imply \eqref{eq:mombd}. 
For the proof of \eqref{eq:2momconv}, we note that
\begin{equation}
	\Phi^{a,h}(f+g)\Rightarrow \Phi^h(f+g) \text{ as }a\downarrow 0,
\end{equation}
and \eqref{eq:Phifconv}  and \eqref{eq:mombd} imply that
\begin{equation}
	\lim_{a\downarrow0}\left\langle\left[\Phi^{a,h}(f+g)\right]^2\right\rangle_{a,h}=\mathbb{E}\left[ \Phi^h(f+g)\right]^2 , \lim_{a\downarrow0}\left\langle\left[\Phi^{a,h}(f)\right]^2\right\rangle_{a,h}=\mathbb{E}\left[\Phi^h(f)\right]^2,
\end{equation}
which completes the proof.
\end{proof}

\begin{proof}[Proof of Proposition \ref{prop:Cov}]
We first consider the easy case: $s\neq t$. Without loss of generality, we may assume that $s<t$. An application of \eqref{eq:Sch} and \eqref{eq:Gri} gives that, for each fixed $\epsilon$ satisfying $2a\leq \epsilon<(t-s)/2$,
\begin{align}\label{eq:upperb}
&\sum_{k\in a\mathbb{Z}\cap [-L,L]}\langle \sigma_{(s_a,0)};\sigma_{(t_a,k)}\rangle_{a,h}\nonumber\\
&\qquad\leq\frac{1}{\left(\lfloor \epsilon/a\rfloor-1\right)^3}\sum_{z, w\in a\mathbb{Z}^2}1_{\{z\in [s,s+\epsilon]\times[-\epsilon/2,\epsilon/2], w\in[t-\epsilon,t]\times [-L-\epsilon,L+\epsilon]\}}\langle \sigma_z;\sigma_w\rangle_{a,h}.
\end{align}
where $\lfloor\cdot\rfloor$ denotes the greatest integer function. It is clear that
\begin{equation}\label{eq:eovera}
\lim_{a\downarrow 0}\frac{\left(\epsilon/a\right)^3}{\left(\lfloor \epsilon/a\rfloor-1\right)^3}=1.
\end{equation}
Lemma \ref{lem:momconv}, \eqref{eq:upperb} and \eqref{eq:eovera} imply that
\begin{align}\label{eq:upperb1}
&\limsup_{a\downarrow0}a^{3/4}\sum_{k\in a\mathbb{Z}\cap [-L,L]}\langle \sigma_{(s_a,0)}; \sigma_{(t_a,k)} \rangle_{a,h} \nonumber\\
&\qquad\leq\frac{1}{\epsilon^3}\int_{[s,s+\epsilon]\times[-\epsilon/2,\epsilon/2]}\int_{[t-\epsilon,t]\times [-L-\epsilon,L+\epsilon]}\hat{H}(|z-w|) dz dw \, .
\end{align}
Note that $\hat{H}$ is real analytic on $(0,\infty)$ (see, e.g., Corollary 19.5.6 of~\cite{GJ87}). Therefore, by letting $\epsilon\downarrow 0$ in \eqref{eq:upperb1}, we get
\begin{equation}\label{eq:upperb2}
\limsup_{a\downarrow0}a^{3/4}\sum_{k\in a\mathbb{Z}\cap [-L,L]}\langle \sigma_{(s_a,0)}; \sigma_{(t_a,k)} \rangle_{a,h}\leq\int_{-L}^L\hat{H}(\sqrt{(t-s)^2+y^2})dy.
\end{equation}

Another application of \eqref{eq:Sch} and \eqref{eq:Gri} gives that, for each fixed $\epsilon$ with $2a\leq\epsilon<L$,
\begin{align}\label{eq:lowerb}
&\sum_{k\in a\mathbb{Z}\cap [-L,L]}\langle \sigma_{(s_a,0)};\sigma_{(t_a,k)}\rangle_{a,h}\nonumber\\
&\qquad\geq\frac{1}{\left(\lfloor \epsilon/a\rfloor+1\right)^3}\sum_{z, w\in a\mathbb{Z}^2}1_{\{z\in [s-\epsilon,s]\times[-\epsilon/2,\epsilon/2], w\in[t,t+\epsilon]\times [-L+\epsilon,L-\epsilon]\}}\langle \sigma_z;\sigma_w\rangle_{a,h}.
\end{align}
The same arguments leading to \eqref{eq:upperb2} imply that
\begin{equation}\label{eq:lowerb2}
\liminf_{a\downarrow0}a^{3/4}\sum_{k\in a\mathbb{Z}\cap [-L,L]}\langle \sigma_{(s_a,0)}; \sigma_{(t_a,k)} \rangle_{a,h}\geq\int_{-L}^L\hat{H}(\sqrt{(t-s)^2+y^2})dy.
\end{equation}
The limit \eqref{eq:Covlimit} with $s<t$ follows from \eqref{eq:upperb2} and \eqref{eq:lowerb2}. 

The proof of \eqref{eq:ttlimit} is similar but simpler than that of \eqref{eq:Covlimit} with $s\neq t$. We next consider the more involved case:  \eqref{eq:Covlimit} with $s=t$. By the SMM inequality \eqref{eq:Sch}, we have
\begin{equation}\label{eq:ssminterm}
	\sum_{k\in a\mathbb{Z}\cap[-L,L]}\langle\sigma_{(0,0)};\sigma_{(0,k)}\rangle_{a,h}\geq \sum_{k\in a\mathbb{Z}\cap[-L,L]}\langle\sigma_{(s_a,0)};\sigma_{(t_a,k)}\rangle_{a,h} \text{ for any } s<t.
\end{equation}
Using \eqref{eq:ssminterm} and \eqref{eq:Covlimit} with $s \neq t$, we obtain
\begin{align}
	\liminf_{a\downarrow 0} a^{3/4}\sum_{k\in a\mathbb{Z}\cap[-L,L]}\langle\sigma_{(0,0)};\sigma_{(0,k)}\rangle_{a,h}&\geq \liminf_{a\downarrow 0}a^{3/4}\sum_{k\in a\mathbb{Z}\cap[-L,L]}\langle\sigma_{(s_a,0)};\sigma_{(t_a,k)}\rangle_{a,h} \\
	&=\int_{-L}^{L} H(t-s,y)dy \text{ for any } s<t.
\end{align}
Taking $t\downarrow s$, we get
\begin{equation}
	\liminf_{a\downarrow 0}a^{3/4}\sum_{k\in a\mathbb{Z}\cap[-L,L]}\langle\sigma_{(0,0)};\sigma_{(0,k)}\rangle_{a,h}\geq \int_{-L}^{L} H(0,y)dy.
\end{equation}
So \eqref{eq:Covlimit} with $s=t$ would follow if we can show
\begin{equation}\label{eq:sumgoal}
	\limsup_{a\downarrow0} a^{3/4}\sum_{k\in a\mathbb{Z}\cap[0,L]}\langle\sigma_{(0,0)};\sigma_{(0,k)}\rangle_{a,h}\leq \int_0^L H(0,y)dy.
\end{equation}

To do this, we first observe that, for any $\eta\in(0,L)$,
\begin{align}
		\limsup_{a\downarrow0} a^{3/4}\sum_{k\in a\mathbb{Z}\cap[0,L]}\langle\sigma_{(0,0)};\sigma_{(0,k)}\rangle_{a,h}\leq 	&\limsup_{a\downarrow0} a^{3/4}\sum_{k\in a\mathbb{Z}\cap[0,\eta]}\langle\sigma_{(0,0)};\sigma_{(0,k)}\rangle_{a,h}+\label{eq:sum0toeta}\\	&\limsup_{a\downarrow0} a^{3/4}\sum_{k\in a\mathbb{Z}\cap[\eta,L]}\langle\sigma_{(0,0)};\sigma_{(0,k)}\rangle_{a,h} \, . \label{eq:sumetatoL}
\end{align}
It is not hard to see that for any $\epsilon$ satisfying $2a\leq\epsilon<\eta/2$
\begin{align}\label{eq:sumcovb}
	&\sum_{k\in a\mathbb{Z}\cap[\eta,L]}\langle\sigma_{(0,0)};\sigma_{(0,k)}\rangle_{a,h}\nonumber\\
	&\qquad\leq \frac{1}{(\lfloor \epsilon/a\rfloor-1)^3}\sum_{z,w\in a\mathbb{Z}^2}1_{\{z\in[-\epsilon/2,\epsilon/2]\times[0,\epsilon],w\in[-\epsilon/2,\epsilon/2]\times[\eta-\epsilon,L+\epsilon]\}}\langle\sigma_z;\sigma_w\rangle_{a,h};
\end{align}
this is because for any $z\in a\mathbb{Z}^2\cap\{[-\epsilon/2,\epsilon/2]\times[0,\epsilon]\}$ and any $w_1\in a\mathbb{Z}\cap[-\epsilon/2,\epsilon/2]$, by the SMM inequality, we have
\begin{equation}\label{eq:sumscomp}
	\sum_{k\in a\mathbb{Z}\cap[\eta-\epsilon,L+\epsilon]}\langle \sigma_{z};\sigma_{(w_1,k)}\rangle_{a,h}\geq\sum_{k\in a\mathbb{Z}\cap[\eta,L]}\langle \sigma_{(0,0)};\sigma_{(0,k)}\rangle_{a,h}.
\end{equation}
Let us check \eqref{eq:sumscomp} for the special case $z=(-\epsilon/2,0)$ and $w_1=\epsilon/2$. 
(We assume $\epsilon/2$ is a multiple of $a$; otherwise we may take 
$\lfloor\epsilon/(2a)\rfloor a$ instead.) Part~(b) of the SMM inequality (reflection with 
respect to the line  with slope $1$ and passing through $(\epsilon/2,0)$) implies that 
\begin{equation}
		\sum_{k\in a\mathbb{Z}\cap[\eta-\epsilon,L+\epsilon]}\langle \sigma_{(-\epsilon/2,0)};\sigma_{(\epsilon/2,k)}\rangle_{a,h}\geq\sum_{k\in a\mathbb{Z}\cap[\eta-\epsilon,L-\epsilon]}\langle \sigma_{(\epsilon/2,-\epsilon)};\sigma_{(\epsilon/2,k)}\rangle_{a,h}.
\end{equation}
Then translation invariance implies
\begin{align}
		\sum_{k\in a\mathbb{Z}\cap[\eta-\epsilon,L+\epsilon]}\langle \sigma_{(-\epsilon/2,0)};\sigma_{(\epsilon/2,k)}\rangle_{a,h}&\geq 	\sum_{k\in a\mathbb{Z}\cap[\eta-\epsilon,L+\epsilon]}\langle \sigma_{(0,0)};\sigma_{(0,k+\epsilon)}\rangle_{a,h}\\
		&\geq\sum_{k\in a\mathbb{Z}\cap[\eta,L]}\langle \sigma_{(0,0)};\sigma_{(0,k)}\rangle_{a,h}.
\end{align}
Letting $a\downarrow 0$ in \eqref{eq:sumcovb} and applying Lemma \ref{lem:momconv}, we get
\begin{align}
	&\limsup_{a\downarrow0} a^{3/4}\sum_{k\in a\mathbb{Z}\cap[\eta,L]}\langle\sigma_{(0,0)};\sigma_{(0,k)}\rangle_{a,h}\nonumber\\
	&\qquad\leq\frac{1}{\epsilon^3}\int_{[-\epsilon/2,\epsilon/2]\times[0,\epsilon]}\int_{[-\epsilon/2,\epsilon/2]\times[\eta-\epsilon,L+\epsilon]}\hat{H}(|z-w|)dzdw.
\end{align}
Letting $\epsilon\downarrow 0$ in the last displayed inequality, we have
\begin{equation}\label{eq:sumetatoLbd}
	\limsup_{a\downarrow0}a^{3/4}\sum_{k\in a\mathbb{Z}\cap[\eta,L]}\langle\sigma_{(0,0)};\sigma_{(0,k)}\rangle_{a,h}\leq\int_{\eta}^L H(0,y)dy.
\end{equation}
Next, we deal with the sum in the RHS of \eqref{eq:sum0toeta}. By the GHS inequality \cite{GHS70},
\begin{equation}\label{eq:sumGHS}
	\sum_{k\in a\mathbb{Z}\cap[0,\eta]}\langle\sigma_{(0,0)};\sigma_{(0,k)}\rangle_{a,h}\leq \sum_{k\in a\mathbb{Z}\cap[0,\eta]}\langle\sigma_{(0,0)}\sigma_{(0,k)}\rangle_{a,h=0},
\end{equation}
where $\langle \cdot\rangle_{a,h=0}$ is expectation with respect to the critical 
Ising model on $a\mathbb{Z}^2$ without external field (i.e., $h=0$). Wu's 
result~\cite{Wu66, MW73} says that there exists $C_3\in(0,\infty)$ such that
\begin{equation}
	\lim_{N\rightarrow\infty}\frac{\langle\sigma_{(0,0)}\sigma_{(0,N)}\rangle_{a=1,h=0}}{N^{-1/4}}=C_3.
\end{equation}
This implies that, for all small enough $a$,
\begin{equation}
	\langle\sigma_{(0,0)}\sigma_{(0,k)}\rangle_{a,h=0}\leq 2C_3\left(\frac{k}{a}\right)^{-1/4} \text{ for any }k\in a\mathbb{Z}\cap[a^{1/2},\infty).
\end{equation}
Combining this with \eqref{eq:sumGHS}, we get
\begin{align}
	\sum_{k\in a\mathbb{Z}\cap[0,\eta]}\langle\sigma_{(0,0)};\sigma_{(0,k)}\rangle_{a,h}&\leq \sum_{k\in a\mathbb{Z}\cap[0,a^{1/2}]}1+\sum_{k\in a\mathbb{Z}\cap[a^{1/2},\eta]}\langle\sigma_{(0,0)};\sigma_{(0,k)}\rangle_{a,h=0}\\
	&\leq\frac{a^{1/2}}{a}+1+2C_3\sum_{k\in a\mathbb{Z}\cap[a^{1/2},\eta]}\left(\frac{k}{a}\right)^{-1/4}.
\end{align}
Multiplying each side by $a^{3/4}$ and taking limits, we obtain
\begin{equation}\label{eq:sum0toetabd}
	\limsup_{a\downarrow 0} a^{3/4}\sum_{k\in a\mathbb{Z}\cap[0,\eta]}\langle\sigma_{(0,0)};\sigma_{(0,k)}\rangle_{a,h}\leq 2C_3\int_0^\eta y^{-1/4} dy=\frac{8C_3}{3}\eta^{3/4}.
\end{equation}
Substituting \eqref{eq:sum0toetabd} and \eqref{eq:sumetatoLbd} into \eqref{eq:sum0toeta} and \eqref{eq:sumetatoL} respectively, we get that for all small $\eta$
\begin{equation}
		\limsup_{a\downarrow 0} a^{3/4}\sum_{k\in a\mathbb{Z}\cap[0,L]}\langle\sigma_{(0,0)};\sigma_{(0,k)}\rangle_{a,h}\leq \frac{8C_3}{3}\eta^{3/4}+\int_{\eta}^L H(0,y)dy.
\end{equation}
This completes the proof of \eqref{eq:sumgoal} by letting $\eta\downarrow 0$.
\end{proof}

\begin{remark}\label{rem:Cov}
For any $h>0$, from \eqref{eq:ttlimit} and Theorem 1.1 of \cite{CJN20a}, we know that $\hat{H}(t)$ decays exponentially for large $t$. Using this fact, it is easy to see that Proposition~\ref{prop:Cov} also extends to $L=\infty$.
\end{remark}

The following corollary, based on Proposition~\ref{prop:Cov} and Remark~\ref{rem:Cov}, 
will be used to prove convergence of $\text{Cov}(X_{L(a)}(s), X_{L(a)}(t))$.
\begin{corollary}\label{cor:Cov}
Fix $h>0$. Suppose $L(a)>0$ is a function of $a$ satisfying $L(a)\rightarrow\infty$ as $a\downarrow0$. Then for any $s,t\in\mathbb{R}$, we have
\begin{equation}
\lim_{a\downarrow 0} a^{3/4}\sum_{k\in a\mathbb{Z}\cap [-L(a),L(a)]}\langle \sigma_{(s_a,0)}; \sigma_{(t_a,k)}\rangle_{a,h}=\int_{-\infty}^{\infty} H(t-s,y) dy=K(t-s).
\end{equation}
\end{corollary}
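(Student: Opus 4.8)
The plan is to deduce the corollary from Proposition~\ref{prop:Cov} and Remark~\ref{rem:Cov} by a monotone sandwich argument, exploiting the nonnegativity of the summands. For $M\in(0,\infty]$ write $S_a(M):=a^{3/4}\sum_{k\in a\mathbb{Z}\cap[-M,M]}\langle\sigma_{(s_a,0)};\sigma_{(t_a,k)}\rangle_{a,h}$. By the GKS inequality \eqref{eq:Gri}, every term $\langle\sigma_{(s_a,0)};\sigma_{(t_a,k)}\rangle_{a,h}$ is nonnegative, so for each fixed $a$ the quantity $S_a(M)$ is nondecreasing in $M$. The two inputs I would use are: (i) for every fixed finite $M$, $\lim_{a\downarrow0}S_a(M)=\int_{-M}^M H(t-s,y)\,dy$ by Proposition~\ref{prop:Cov}; and (ii) $\lim_{a\downarrow0}S_a(\infty)=K(t-s)$, which holds because $\hat H$ decays exponentially for $h>0$, so that Proposition~\ref{prop:Cov} extends to $M=\infty$ (Remark~\ref{rem:Cov}) and $\int_{-\infty}^\infty H(t-s,y)\,dy=K(t-s)$ by \eqref{eq:Kdef}.

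For the upper bound, monotonicity gives $S_a(L(a))\le S_a(\infty)$ for every $a$, hence $\limsup_{a\downarrow0}S_a(L(a))\le\lim_{a\downarrow0}S_a(\infty)=K(t-s)$ by (ii). For the lower bound I would fix an arbitrary finite $M$; since $L(a)\to\infty$, we have $L(a)>M$ for all sufficiently small $a$, and then monotonicity gives $S_a(L(a))\ge S_a(M)$. Taking $\liminf$ and invoking (i) yields $\liminf_{a\downarrow0}S_a(L(a))\ge\int_{-M}^M H(t-s,y)\,dy$. Finally I would let $M\uparrow\infty$: the integrand is nonnegative, being a limit of the nonnegative truncated functions in \eqref{eq:ttlimit}, so $\int_{-M}^M H(t-s,y)\,dy\uparrow\int_{-\infty}^\infty H(t-s,y)\,dy=K(t-s)$ by monotone convergence, giving $\liminf_{a\downarrow0}S_a(L(a))\ge K(t-s)$. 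Combining the two bounds shows that the limit exists and equals $K(t-s)$.

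The only genuinely delicate point is the order of the limits in the lower bound: one cannot simply set $M=L(a)$ and quote Proposition~\ref{prop:Cov}, since that proposition is stated for a cutoff $M$ fixed independently of $a$. The monotonicity supplied by \eqref{eq:Gri} is precisely what licenses taking $a\downarrow0$ first at a fixed cutoff $M$ and removing the cutoff afterward, while the exponential decay of $\hat H$ from Remark~\ref{rem:Cov} is what guarantees that removing the cutoff on the limiting side recovers the full integral $K(t-s)$ rather than a strictly smaller value. Thus the heart of the matter is not any new estimate but the interchange of limits justified by nonnegativity together with exponential decay.
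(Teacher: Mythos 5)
Your proposal is correct and follows essentially the same route as the paper's own proof: the upper bound via GKS nonnegativity plus Remark~\ref{rem:Cov} (the $L=\infty$ extension), and the lower bound by fixing a finite cutoff $M$, applying Proposition~\ref{prop:Cov}, and then letting $M\uparrow\infty$. The paper's argument is identical in structure, so there is nothing to add beyond your (correct) observation that nonnegativity of the summands is what justifies the order of limits.
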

\begin{proof}[Proof of Corollary \ref{cor:Cov}]
The GKS inequality \eqref{eq:Gri} and Remark \ref{rem:Cov} imply that
\begin{align}
\limsup_{a\downarrow0}a^{3/4}\sum_{k\in a\mathbb{Z}\cap [-L(a),L(a)]}\langle \sigma_{(s_a,0)}; \sigma_{(t_a,k)}\rangle_{a,h}&\leq\limsup_{a\downarrow0}a^{3/4}\sum_{k\in a\mathbb{Z}}\langle \sigma_{(s_a,0)}; \sigma_{(t_a,k)}\rangle_{a,h}\nonumber\\
&=\int_{-\infty}^{\infty} H(t-s,y) dy.\label{eq:Covlimsup}
\end{align}
On the other hand, for any fixed $M>0$, one may choose $\epsilon>0$ such that $L(a)\geq M$ for each $a\in(0,\epsilon)$. Then the GKS inequality \eqref{eq:Gri} and Proposition \ref{prop:Cov} imply that
\begin{align}
\liminf_{a\downarrow0}a^{3/4}\sum_{k\in a\mathbb{Z}\cap [-L(a),L(a)]}\langle \sigma_{(s_a,0)}; \sigma_{(t_a,k)}\rangle_{a,h}&\geq\liminf_{a\downarrow0}a^{3/4}\sum_{k\in a\mathbb{Z}\cap [-M,M]}\langle \sigma_{(s_a,0)}; \sigma_{(t_a,k)}\rangle_{a,h}\nonumber\\
&=\int_{-M}^{M} H(t-s,y) dy.
\end{align}
Letting $M\rightarrow\infty$, we obtain
\begin{equation}\label{eq:Covliminf}
\liminf_{a\downarrow0}a^{3/4}\sum_{k\in a\mathbb{Z}\cap [-L(a),L(a)]}\langle \sigma_{(s_a,0)}; \sigma_{(t_a,k)}\rangle_{a,h}\geq \int_{-\infty}^{\infty} H(t-s,y) dy.
\end{equation}
The corollary follows from \eqref{eq:Covlimsup} and \eqref{eq:Covliminf}.
\end{proof}

We are ready to prove convergence of $\text{Cov}(X_{L(a)}(s), X_{L(a)}(t))$ as $a\downarrow0$.
\begin{proposition}\label{prop:Cov1}
Fix $h>0$. Suppose $L(a)>0$ is a function of $a$ satisfying $L(a)\rightarrow\infty$ as $a\downarrow0$. Then for any $s,t\in \mathbb{R}$, we have
\begin{equation}
\lim_{a\downarrow0}\emph{Cov}(X_{L(a)}(s), X_{L(a)}(t))=K(t-s).
\end{equation}
\end{proposition}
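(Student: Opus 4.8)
The plan is to expand the covariance into a double sum over the two vertical coordinates, use vertical translation invariance to reduce it to the single-index sums already controlled in Corollary~\ref{cor:Cov} and Proposition~\ref{prop:Cov}, and then sandwich the result using the GKS inequality~\eqref{eq:Gri}. Writing $L=L(a)$ and recalling $X_L(s)=\frac{a^{7/8}}{\sqrt{2L}}\sum_{j\in a\mathbb{Z}\cap[-L,L]}[\sigma_{(s_a,j)}-\langle\sigma_{(s_a,j)}\rangle_{a,h}]$, one has
\begin{equation}
\text{Cov}(X_{L}(s),X_{L}(t))=\frac{a^{7/4}}{2L}\sum_{j,k\in a\mathbb{Z}\cap[-L,L]}\langle\sigma_{(s_a,j)};\sigma_{(t_a,k)}\rangle_{a,h}=\frac{a}{2L}\sum_{j\in a\mathbb{Z}\cap[-L,L]}g_a(j),
\end{equation}
where, by translation invariance of the infinite-volume measure in the vertical direction,
\begin{equation}
g_a(j):=a^{3/4}\sum_{k\in a\mathbb{Z}\cap[-L,L]}\langle\sigma_{(s_a,0)};\sigma_{(t_a,k-j)}\rangle_{a,h}.
\end{equation}
The outer normalization is benign, since the number of points in $a\mathbb{Z}\cap[-L,L]$ is $2\lfloor L/a\rfloor+1$, whence $\frac{a}{2L}\#(a\mathbb{Z}\cap[-L,L])\to1$.

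For the upper bound I would observe that every summand in $g_a(j)$ is nonnegative by~\eqref{eq:Gri}, so extending the range of $m=k-j$ to all of $a\mathbb{Z}$ gives the $j$-\emph{independent} bound $g_a(j)\le a^{3/4}\sum_{m\in a\mathbb{Z}}\langle\sigma_{(s_a,0)};\sigma_{(t_a,m)}\rangle_{a,h}$, whose limit is $K(t-s)$ by Remark~\ref{rem:Cov} and Corollary~\ref{cor:Cov}. Combining this uniform bound with the counting above yields $\limsup_{a\downarrow0}\text{Cov}(X_L(s),X_L(t))\le K(t-s)$.

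For the lower bound, fix $M>0$. For every $j$ with $|j|\le L-M$ the shifted index $m=k-j$ ranges over an interval containing $a\mathbb{Z}\cap[-M,M]$, so discarding the remaining nonnegative terms via~\eqref{eq:Gri} gives the $j$-independent lower bound $g_a(j)\ge a^{3/4}\sum_{m\in a\mathbb{Z}\cap[-M,M]}\langle\sigma_{(s_a,0)};\sigma_{(t_a,m)}\rangle_{a,h}$, whose limit is $\int_{-M}^{M}H(t-s,y)\,dy$ by Proposition~\ref{prop:Cov}. Dropping the boundary band $|j|>L-M$ (again nonnegative by~\eqref{eq:Gri}) and using that $\frac{a}{2L}\#(a\mathbb{Z}\cap[-(L-M),L-M])\to1$, which holds because $L(a)\to\infty$ with $M$ fixed, I would obtain $\liminf_{a\downarrow0}\text{Cov}(X_L(s),X_L(t))\ge\int_{-M}^{M}H(t-s,y)\,dy$. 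Letting $M\to\infty$ gives $\liminf\ge K(t-s)$, matching the upper bound.

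The step I expect to be the crux is the lower bound, specifically interchanging the $a\downarrow0$ limit with the two-parameter sum in the regime where $L=L(a)$ itself diverges. The argument works only because the GKS inequality lets me replace the $j$-dependent, asymmetrically truncated inner sums by a single $j$-independent symmetric sum over $[-M,M]$, pay for the boundary band $|j|>L-M$ through nonnegativity alone, and then send $M\to\infty$ after $a\downarrow0$; the exponential decay of $\hat H$ recorded in Remark~\ref{rem:Cov} is what guarantees that these two iterated limits agree and produce $K(t-s)$.
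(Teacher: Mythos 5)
Your proof is correct, and it shares the paper's overall skeleton: expand the covariance into a double sum, use vertical translation invariance to reduce to $j$-independent row sums, and sandwich using nonnegativity of truncated correlations. The ingredients differ in two places, though. For the upper bound, the paper invokes the SMM inequality \eqref{eq:Sch} to dominate each shifted row sum $\sum_{k\in a\mathbb{Z}\cap[-L,L]}\langle\sigma_{(s_a,j)};\sigma_{(t_a,k)}\rangle_{a,h}$ by the centered one and then applies Corollary~\ref{cor:Cov}, whereas you use the GKS inequality \eqref{eq:Gri} to extend each row sum to all of $a\mathbb{Z}$ and quote the $L=\infty$ statement of Remark~\ref{rem:Cov}; both routes ultimately rest on the same fact (exponential decay of $\hat{H}$ for $h>0$), but yours shows that SMM is not needed in this proposition at all---it is only needed inside the proof of Proposition~\ref{prop:Cov} itself. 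For the lower bound, the paper keeps a proportional inner window $[-\epsilon L(a),\epsilon L(a)]$, applies Corollary~\ref{cor:Cov} (which requires $\epsilon L(a)\to\infty$), and sends $\epsilon\downarrow0$ at the end, whereas you keep a fixed window $[-M,M]$, apply Proposition~\ref{prop:Cov} directly, and send $M\to\infty$ after $a\downarrow0$---essentially re-running the argument by which the paper deduces Corollary~\ref{cor:Cov} from Proposition~\ref{prop:Cov}. The two versions are interchangeable: yours is marginally more self-contained for the lower bound and avoids one correlation inequality for the upper bound, while the paper's version concentrates all limit identifications in Corollary~\ref{cor:Cov} so that this proposition becomes a short counting argument.
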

\begin{proof}[Proof of Proposition \ref{prop:Cov1}]
By the definition of $X_{L(a)}(s)$ in \eqref{eq:DefX_L}, the SMM inequality \eqref{eq:Sch}, and Corollary \ref{cor:Cov}, we have
\begin{align}\label{eq:Covlimsup1}
&\limsup_{a\downarrow0}\text{Cov}(X_{L(a)}(s), X_{L(a)}(t))=\limsup_{a\downarrow0}\frac{a^{7/4}}{2L(a)}\sum_{k,j\in a\mathbb{Z}\cap [-L(a),L(a)]}\langle \sigma_{(s_a,k)};\sigma_{(t_a,j)}\rangle_{a,h}\nonumber\\
&\qquad\leq\limsup_{a\downarrow0}\frac{a^{7/4}}{2L(a)}\left(\left\lfloor\frac{2L(a)}{a}\right\rfloor+1\right)\sum_{j\in a\mathbb{Z}\cap [-L(a),L(a)]}\langle \sigma_{(s_a,0)};\sigma_{(t_a,j)}\rangle_{a,h}\nonumber\\
&\qquad=K(t-s).
\end{align}
For any fixed $\epsilon\in(0,1)$, the GKS inequality \eqref{eq:Gri}, translation invariance of the Ising model on $a\mathbb{Z}^2$ and Corollary \ref{cor:Cov} imply that
\begin{align}
&\liminf_{a\downarrow0}\text{Cov}(X_{L(a)}(s), X_{L(a)}(t))\nonumber\\
&\qquad\geq\liminf_{a\downarrow0}\frac{a^{7/4}}{2L(a)}\left\lfloor\frac{(2-2\epsilon)L(a)}{a}\right\rfloor\sum_{j\in a\mathbb{Z}\cap [-\epsilon L(a),\epsilon L(a)]}\langle \sigma_{(s_a,0)};\sigma_{(t_a,j)}\rangle_{a,h}\nonumber\\
&\qquad=(1-\epsilon)K(t-s).
\end{align}
Letting $\epsilon\downarrow0$, we get
\begin{equation}\label{eq:Covliminf1}
\liminf_{a\downarrow0}\text{Cov}(X_{L(a)}(s), X_{L(a)}(t))\geq K(t-s).
\end{equation}
The proposition now follows from \eqref{eq:Covlimsup1} and \eqref{eq:Covliminf1}.
\end{proof}

Another important ingredient in the proof of Theorem \ref{thm:main} is the following inequality from \cite{New80}.
\begin{theorem}[Theorem 1 in \cite{New80}]\label{thm:New}
Suppose $U_1,\dots,U_m$ have finite variance and satisfy the FKG inequality; then for any $r_1,\dots,r_m\in\mathbb{R}$,
\begin{equation}
\left|\left\langle \exp{\left(i\sum_{l=1}^m r_lU_l\right)}\right\rangle-\prod_{l=1}^m\langle\exp{(ir_lU_l)}\rangle\right|\leq \frac{1}{2}\mathop{\sum\sum}_{l\neq n}|r_lr_n|\emph{Cov}(U_l,U_n),
\end{equation}
where $\langle\cdot\rangle$ denotes expectation.
\end{theorem}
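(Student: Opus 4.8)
The plan is to express the difference between the joint and the product of marginal characteristic functions as a telescoping sum of bilinear covariances, and then to control each covariance by the corresponding $\mathrm{Cov}(U_l,U_n)$ through Hoeffding's covariance identity together with the positivity that the FKG hypothesis forces on covariances of indicator functions. Write $\langle\cdot\rangle$ for the expectation, abbreviate $g_l:=\exp(ir_lU_l)$ (so $|g_l|=1$ and $|\langle g_l\rangle|\le 1$), and let $\mathrm{Cov}(Z,W):=\langle ZW\rangle-\langle Z\rangle\langle W\rangle$ denote the (non-conjugated) bilinear covariance.

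First I set up the telescoping. For $1\le k\le m+1$ set
\begin{equation}
B_k:=\Big(\prod_{l<k}\langle g_l\rangle\Big)\,\Big\langle\exp\Big(i\sum_{l\ge k}r_lU_l\Big)\Big\rangle,
\end{equation}
with empty products and empty sums giving $1$. Then $B_1=\langle\exp(i\sum_l r_lU_l)\rangle$ and $B_{m+1}=\prod_l\langle g_l\rangle$, so the quantity to be estimated equals $|B_1-B_{m+1}|\le\sum_{k=1}^m|B_k-B_{k+1}|$, and a direct computation gives
\begin{equation}
B_k-B_{k+1}=\Big(\prod_{l<k}\langle g_l\rangle\Big)\,\mathrm{Cov}\Big(g_k,\ \exp\Big(i\sum_{l>k}r_lU_l\Big)\Big).
\end{equation}
Since $\big|\prod_{l<k}\langle g_l\rangle\big|\le 1$, it remains to show that each covariance term is bounded by $\sum_{l>k}|r_kr_l|\,\mathrm{Cov}(U_k,U_l)$.

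The engine for this is a covariance inequality for associated variables: for complex Lipschitz $F,G$ on $\mathbb{R}^m$,
\begin{equation}
\big|\mathrm{Cov}\big(F(U_1,\dots,U_m),G(U_1,\dots,U_m)\big)\big|\le\sum_{i,j=1}^m\sup|\partial_iF|\,\sup|\partial_jG|\,\mathrm{Cov}(U_i,U_j).
\end{equation}
I would prove this from the (multivariate) Hoeffding identity, which represents a covariance of Lipschitz functions as an integral of $\partial_iF\,\partial_jG$ against the indicator covariances $\mathrm{Cov}(\mathbf 1_{U_i>\xi},\mathbf 1_{U_j>\zeta})$; the FKG hypothesis gives $\mathrm{Cov}(\mathbf 1_{U_i>\xi},\mathbf 1_{U_j>\zeta})\ge 0$ because the indicators are coordinatewise increasing, so pulling absolute values inside and using $\iint\mathrm{Cov}(\mathbf 1_{U_i>\xi},\mathbf 1_{U_j>\zeta})\,d\xi\,d\zeta=\mathrm{Cov}(U_i,U_j)$ yields the bound. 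The complex exponentials are admissible because $|(\exp(ir\cdot))'|=|r|$ and the indicator covariances are integrable (their integral is the finite $\mathrm{Cov}(U_i,U_j)$), so the representation converges absolutely; the bilinear (non-conjugated) covariance is handled by splitting into real and imaginary parts.

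Applying this inequality with $F=g_k$ (which depends only on the coordinate $U_k$, so $\sup|\partial_kF|=|r_k|$ and all other partials vanish) and $G=\exp(i\sum_{l>k}r_lU_l)$ (which depends only on the coordinates $U_l$ with $l>k$, so $\sup|\partial_lG|=|r_l|$) leaves only the cross terms $i=k$, $j=l>k$, giving exactly $\sum_{l>k}|r_kr_l|\,\mathrm{Cov}(U_k,U_l)$. Summing over $k$ and using that each unordered pair $\{k,l\}$ is counted once yields $\sum_{k<l}|r_kr_l|\,\mathrm{Cov}(U_k,U_l)=\tfrac12\sum_{l\ne n}|r_lr_n|\,\mathrm{Cov}(U_l,U_n)$, which is the claimed bound. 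The main obstacle is the covariance inequality itself: establishing the multivariate Hoeffding representation for complex Lipschitz functions with the required absolute convergence, and verifying that the FKG hypothesis indeed delivers the positivity $\mathrm{Cov}(\mathbf 1_{U_i>\xi},\mathbf 1_{U_j>\zeta})\ge 0$. It is worth noting that the signs of the $r_l$ cause no difficulty, since only $|r_l|=\sup|\partial_lG|$ enters and the indicator covariances are nonnegative regardless of those signs.
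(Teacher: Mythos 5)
Your telescoping step is fine (and is essentially how the argument in \cite{New80} also begins), and so are the two facts you cite about indicators: FKG gives $\mathrm{Cov}(\mathbf{1}_{U_i>\xi},\mathbf{1}_{U_j>\zeta})\ge 0$, and $\iint \mathrm{Cov}(\mathbf{1}_{U_i>\xi},\mathbf{1}_{U_j>\zeta})\,d\xi\,d\zeta=\mathrm{Cov}(U_i,U_j)$. The gap is the engine itself: there is no ``multivariate Hoeffding identity'' of the kind you invoke. Hoeffding's representation
\begin{equation*}
\mathrm{Cov}(f(X),g(Y))=\iint f'(\xi)\,g'(\zeta)\,\mathrm{Cov}(\mathbf{1}_{X>\xi},\mathbf{1}_{Y>\zeta})\,d\xi\,d\zeta
\end{equation*}
is intrinsically a statement about a function of one real variable in each argument: it works because $f'(\xi)$ and $g'(\zeta)$ are \emph{deterministic} functions of the integration variables. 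If $G$ genuinely depends on several coordinates, any analogous decomposition (say, interpolating between $U$ and an independent copy $U'$ one coordinate at a time) produces kernels of the form $\partial_j G(U_{k+1},\dots,U_{j-1},\zeta,U'_{j+1},\dots)$, i.e.\ derivatives evaluated at random, mixed arguments that are correlated with the indicator differences. One cannot pull out $\sup|\partial_j G|$ and still recognize what remains as $\mathrm{Cov}(\mathbf{1}_{U_i>\xi},\mathbf{1}_{U_j>\zeta})$; taking absolute values inside the expectation destroys exactly the positivity you need. So the covariance inequality your proof rests on---for \emph{complex} Lipschitz $F,G$ with the sharp constant $\sum_{i,j}\sup|\partial_iF|\sup|\partial_jG|\,\mathrm{Cov}(U_i,U_j)$---is not established by your sketch. (For real Lipschitz functions it can be proved by a different, monotonicity-based argument, but splitting $e^{i(\cdot)}$ into real and imaginary parts then loses a constant factor and no longer yields the stated bound.)

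Relatedly, your closing remark that ``the signs of the $r_l$ cause no difficulty'' is exactly backwards: the signs are the one genuine subtlety, and it is to dodge them that Newman's proof never needs a multivariate lemma at all. The key observation is that nonnegative linear combinations of FKG variables are increasing functions of them, hence again associated. Thus if all $r_l\ge 0$ for $l>k$, setting $S=\sum_{l>k}r_lU_l$ makes $(U_k,S)$ an associated \emph{pair}, $e^{iS}$ is a function of the single variable $S$ with derivative of modulus $1$, and the honest bivariate Hoeffding identity gives, with no loss for complex exponentials,
\begin{equation*}
\bigl|\mathrm{Cov}\bigl(e^{ir_kU_k},e^{iS}\bigr)\bigr|\le |r_k|\,\mathrm{Cov}(U_k,S)=\sum_{l>k}|r_kr_l|\,\mathrm{Cov}(U_k,U_l).
\end{equation*}
General signs are then handled by splitting the indices into $P=\{l:r_l\ge0\}$ and $N=\{l:r_l<0\}$, writing $\sum_l r_lU_l=V-W$ with $V=\sum_{l\in P}r_lU_l$ and $W=\sum_{l\in N}|r_l|U_l$ both increasing, bounding $|\mathrm{Cov}(e^{iV},e^{-iW})|\le \mathrm{Cov}(V,W)=\sum_{l\in P,\,n\in N}|r_lr_n|\mathrm{Cov}(U_l,U_n)$ (bivariate Hoeffding again, for the associated pair $(V,W)$), and telescoping within each same-sign block as above; every unordered pair is then counted exactly once, which is the claimed $\tfrac12\sum_{l\neq n}$. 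If you want to repair your write-up, replace the unproven multivariate lemma by this reduction to covariances of functions of two scalar associated random variables.
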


\begin{proof}[Proof of Theorem \ref{thm:main}]
For fixed $\vec{z}=(z_1,\dots,z_n)\in\mathbb{R}^n$ and $\vec{s}=(s_1,\dots,s_n)\in\mathbb{R}^n$, we define
\begin{equation}
Y_L=Y_L(\vec{z},\vec{s})=z_1X_L(s_1)+\cdots+z_nX_L(s_n).
\end{equation}
Note that \eqref{eq:thmmain} is equivalent to
\begin{equation}\label{eq:cha}
\lim_{a\downarrow0}\left\langle \exp(iY_{L(a)})\right\rangle_{a,h}=\exp\left(-\frac{1}{2}\sum_{j=1}^n\sum_{l=1}^n z_j z_l K(s_j-s_l)\right),
\end{equation}
for each $(z_1,\dots,z_n)\in\mathbb{R}^n$.
We will prove \eqref{eq:cha} for $(z_1,\dots,z_n)\in[0,\infty)^n$.  We claim that 
this yields a proof of Theorem \ref{thm:main} by applying the same argument used in the proof of Theorem 1 of \cite{CJN20c}; which for completeness, we reproduce below. The basic idea is to define
\begin{equation}
	Y_{L(a)}^+:=\sum_{j:z_j\geq 0}z_j X_{L(a)}(s_j), Y_{L(a)}^-:=\sum_{j:z_j< 0}|z_j| X_{L(a)}(s_j),
\end{equation}
and note that $cY_{L(a)}^++dY_{L(a)}^-$ for any $c\geq0,d\geq 0$ converges (as $a\downarrow0$) in distribution to a Gaussian distribution with mean zero and variance
\begin{equation}\label{eq:Zvar}
	\sum_{j=1}^n\sum_{l=1}^n\left(c1_{\{z_j\geq 0\}}+d1_{\{z_j<0\}}\right)\left(c1_{\{z_l\geq 0\}}+d1_{\{z_l<0\}}\right)|z_jz_l|K(s_j-s_l).
\end{equation}
So we may assume $(Y_{L(a)}^+, Y_{L(a)}^-)$ has a subsequential limit in distribution $(Y^+,Y^-)$. Then $cY^++dY^-$ for $c,d\geq 0$ is a mean zero Gaussian random variable with variance given by \eqref{eq:Zvar}. Theorem~3 of \cite{HT75} then implies that $(Y^+,Y^-)$ is a bivariate normal vector whose distribution is already determined by $cY^++dY^-$ for $c,d\geq0$. Therefore,
\begin{equation}
	(Y_{L(a)}^+, Y_{L(a)}^-)\Rightarrow (Y^+,Y^-) \text{ as }a\downarrow 0.
\end{equation}
In particular, we have
\begin{equation}
	Y_{L(a)}=Y_{L(a)}^+ - Y_{L(a)}^-\Rightarrow Y^+-Y^- \text{ as }a\downarrow 0,
\end{equation}
which is the claim.

For $j\in \mathbb{N}$ and $j\in\left[-\lfloor L^{1/2}\rfloor,\lfloor L^{1/2}\rfloor\right]$, we define
\begin{equation}
	Y_L^{(j)}:=\begin{cases}\displaystyle \sum_{l=1}^{n} z_lL^{-1/4}a^{7/8}\sum_{k\in a\mathbb{Z}\cap \left[j\lfloor L^{1/2}\rfloor,(j+1)\lfloor L^{1/2}\rfloor\right)}\left[\sigma_{(s_{l,a},k)}-\langle\sigma_{(s_{l,a},k)}\rangle_{a,h}\right],~j\leq\lfloor L^{1/2}\rfloor-1\\
		\displaystyle 2^{1/2}L^{1/4}Y_L-\sum_{k=-\lfloor L^{1/2}\rfloor}^{\lfloor L^{1/2}\rfloor-1}Y_{L}^{(k)},~j=\lfloor L^{1/2}\rfloor.
		\end{cases}
\end{equation}
where $s_{l,a}$ is a point in $a\mathbb{Z}$ which is closest to $s_l$. Note that by translation invariance, $Y_L^{(j)}$ for all $j$'s with $-\lfloor L^{1/2}\rfloor\leq j\leq\lfloor L^{1/2}\rfloor-1$ are identically distributed random variables. 
Then Theorem \ref{thm:New} implies that (this is where we use the assumption that all $z_l$'s are nonnegative)
\begin{align}
&\left|\left\langle\exp(iY_{L(a)})\right\rangle_{a,h}-\prod_{j=-\lfloor L(a)^{1/2}\rfloor}^{\lfloor L(a)^{1/2}\rfloor}\left\langle\exp(i2^{-1/2}L(a)^{-1/4}Y_{L(a)}^{(j)})\right\rangle_{a,h}\right|\nonumber\\
&\qquad\leq\frac{1}{2}\mathop{\sum\sum}_{j\neq l} 2^{-1}L(a)^{-1/2}\text{Cov}\left(Y_{L(a)}^{(j)}, Y_{L(a)}^{(l)}\right)\nonumber\\
&\qquad=\frac{1}{2}\left[\text{Var}(Y_{L(a)})-2^{-1}L(a)^{-1/2}\sum_{j=-\lfloor L(a)^{1/2}\rfloor}^{\lfloor L(a)^{1/2}\rfloor}\text{Cov}\left(Y_{L(a)}^{(j)}, Y_{L(a)}^{(j)}\right)\right]\nonumber\\
&\qquad=\frac{1}{2}\left[\text{Var}(Y_{L(a)})-\frac{\lfloor L(a)^{1/2}\rfloor}{L(a)^{1/2}}\text{Var}(Y_{L(a)}^{(0)})-\frac{1}{2\sqrt{L(a)}}\text{Var}(Y_{L(a)}^{(\lfloor L(a)^{1/2}\rfloor)})\right],
\end{align}
where we have used that $Y_{L(a)}^{(j)}$ has the same distribution as $Y_{L(a)}^{(0)}$ if $j\leq\lfloor L(a)^{1/2}\rfloor-1$. By Proposition \ref{prop:Cov1}, we have
\begin{align}
&\lim_{a\downarrow0}\text{Var}(Y_{L(a)})=\sum_{j=1}^n\sum_{l=1}^n z_jz_lK(s_j-s_l),\\
&\lim_{a\downarrow0}\text{Var}(Y_{L(a)}^{(0)})=\sum_{j=1}^n\sum_{l=1}^n z_jz_lK(s_j-s_l).\label{eq:CovY_L0}
\end{align}
By \eqref{eq:Gri}, translation invariance and the Cauchy-Schwarz inequality, we have
\begin{equation}\label{eq:cha0}
\text{Var}(Y_{L(a)}^{(\lfloor L(a)^{1/2}\rfloor)})\leq 4\text{Var}(Y_{L(a)}^{(0)}).
\end{equation}
Therefore,
\begin{equation}\label{eq:cha1}
\lim_{a\downarrow0}\left|\left\langle\exp(iY_{L(a)})\right\rangle_{a,h}-\prod_{j=-\lfloor L(a)^{1/2}\rfloor}^{\lfloor L(a)^{1/2}\rfloor}\left\langle\exp(i2^{-1/2}L(a)^{-1/4}Y_{L(a)}^{(j)})\right\rangle_{a,h}\right|=0.
\end{equation}
Since \eqref{eq:CovY_L0} implies that $\text{Var}(Y_{L(a)}^{(0)})<\infty$ for all small $a$, a standard Taylor expansion result (see, e.g., Theorem 3.3.20 of \cite{Dur19}) implies that
\begin{align}\label{eq:cha2}
&\lim_{a\downarrow0}\left[\left\langle\exp(i2^{-1/2}L(a)^{-1/4}Y_{L(a)}^{(0)})\right\rangle_{a,h}\right]^{2\lfloor L(a)^{1/2}\rfloor}\nonumber\\
&\qquad=\lim_{a\downarrow0}\left[1-4^{-1}L(a)^{-1/2}\text{Var}(Y_{L(a)}^{(0)})+o(L(a)^{-1/2})\right]^{2\lfloor L(a)^{1/2}\rfloor}\nonumber\\
&\qquad=\exp\left(-\frac{1}{2}\sum_{j=1}^n\sum_{l=1}^n z_jz_lK(s_j-s_l)\right) \, ,
\end{align}
where we have used \eqref{eq:CovY_L0} and the fact that $\lim_{x\rightarrow\infty}(1-f(x)/x)^x=e^{-A}$ if $\lim_{x\rightarrow\infty}f(x)=A\in\mathbb{R}$. Now it is easy to see that \eqref{eq:cha} follows from \eqref{eq:cha0}, \eqref{eq:cha1} and \eqref{eq:cha2}.
\end{proof}

We conclude with a proof of Theorem~\ref{thm:gend}.
\begin{proof}[Proof of Theorem \ref{thm:gend}]
Without loss of generality, we may assume $a=1$ in the proof. Theorem \ref{thm:gend} is equivalent to
\begin{equation}\label{eq:cha21}
\lim_{L\rightarrow\infty}\left\langle \exp(i(z_1\hat{X}^a_L(s_1)+\cdots+ z_n\hat{X}^a_L(s_n)))\right\rangle=\exp\left(-\frac{1}{2}\sum_{j=1}^n\sum_{l=1}^n z_j z_l \hat{K}^a(s_j-s_l)\right),
\end{equation}
for each $(z_1,\dots,z_n)\in\mathbb{R}^n$.
By the same argument as in the proof of Theorem \ref{thm:main}, it is enough to show \eqref{eq:cha21} for $(z_1,\dots,z_n)\in[0,\infty)^n$. Note that $\langle\sigma_z;\sigma_w\rangle$ decays exponentially as $|z-w|\rightarrow\infty$ (see \cite{LP68,FR17}). This and (26) in Lemma 4 of \cite{New80} imply that
\begin{equation}\label{eq:cha22}
\lim_{L\rightarrow\infty}\frac{\text{Var}(\sum_{\vec{y}\in \mathbb{Z}^{d-1}\cap[-L,L]^{d-1}}\sigma_{(s,\vec{y})})}{(2L)^{d-1}}=\sum_{\vec{y}\in\mathbb{Z}^{d-1}}\langle \sigma_{(0,\vec{0})};\sigma_{(0,\vec{y})}\rangle,~\forall s\in \mathbb{Z}.
\end{equation}
A direct application of Theorem 2 in \cite{New80} implies that
\begin{align}
\frac{\sum_{j=1}^n z_j\sum_{\vec{y}\in \mathbb{Z}^{d-1}\cap[-L,L]^{d-1}}\sigma_{(s_j,\vec{y})}-\sum_{j=1}^n z_j\sum_{\vec{y}\in \mathbb{Z}^{d-1}\cap[-L,L]^{d-1}}\langle\sigma_{(s_j,\vec{y})}\rangle}{(2L)^{(d-1)/2}}
\end{align}
converges as $L\uparrow\infty$ in distribution to a normal distribution with mean $0$ and variance
\begin{equation}
\sum_{j=1}^n\sum_{l=1}^n z_j z_l\sum_{y\in\mathbb{Z}^{d-1}}\langle\sigma_{(s_j,\vec{0})};\sigma_{(s_l,\vec{y})}\rangle \, .
\end{equation}
This combined with \eqref{eq:cha22} completes the proof of \eqref{eq:cha21}.
\end{proof}

\section*{Acknowledgements}
The research of the second author was partially supported by NSFC grant 11901394 and STCSM grant 17YF1413300, and that of the third author by US-NSF grant DMS-1507019. 

\bibliographystyle{abbrv}
\bibliography{reference}
\end{document}